\tikzset{
  laser beam action/.style={
    line width=\pgflinewidth+1.4pt,draw opacity=.095,draw=#1,
  },
  laser beam recurs/.code 2 args={%
    \pgfmathtruncatemacro{\level}{#1-1}%
    \ifthenelse{\equal{\level}{0}}%
    {\tikzset{preaction={laser beam action=#2}}}%
    {\tikzset{preaction={laser beam action=#2,laser beam recurs={\level}{#2}}}}
  },
  laser beam/.style={preaction={laser beam recurs={20}{#1}},draw opacity=1,draw=#1},
}
\definecolor{navy}{rgb}{0,0,.65}
\def\@tocline#1#2#3#4#5#6#7{\relax
  \ifnum #1>\c@tocdepth 
  \else
    \par \addpenalty\@secpenalty\addvspace{#2}%
    \begingroup \hyphenpenalty\@M
    \@ifempty{#4}{%
      \@tempdima\csname r@tocindent\number#1\endcsname\relax
    }{%
      \@tempdima#4\relax
    }%
    \parindent\z@ \leftskip#3\relax \advance\leftskip\@tempdima\relax
    \rightskip\@pnumwidth plus4em \parfillskip-\@pnumwidth
    #5\leavevmode\hskip-\@tempdima
      \ifcase #1
       \or\or \hskip 1em \or \hskip 2em \else \hskip 3em \fi%
      #6\nobreak\relax
    \dotfill\hbox to\@pnumwidth{\@tocpagenum{#7}}\par
    \nobreak
    \endgroup
  \fi}
\newtheorem{theorem}{Theorem}[subsection]
\newtheorem{corollary}[theorem]{Corollary}
\newtheorem{lemma}[theorem]{Lemma}
\newtheorem{proposition}[theorem]{Proposition}
\newtheorem{definition}[theorem]{Definition}
\newtheorem{conjecture}[theorem]{Conjecture}
\newtheorem{quasi-theorem}[theorem]{Quasi-Theorem}
\newtheorem{blank remark}[theorem]{}
\newtheorem{Th}{Theorem}[]
\newtheorem{rem1}[theorem]{Remark}
\newenvironment{remark}{\begin{rem1}\em}{\end{rem1}}
\newtheorem{not1}[theorem]{Notation}
\newenvironment{notation}{\begin{not1}\em}{\end{not1}}
\newcommand{\A}{{\mathbb{A}}}
\newcommand{\PP}{\mathbb{P}}         
\newcommand{\QQ} {{\mathbb Q}}		
\newcommand{\RR} {{\mathbb R}}
\newcommand{\FF}{{\mathbb F}}
\DeclareMathOperator{\Gal}{Gal}
\DeclareMathOperator{\sSpec}{Spec_{\ \!}}
\newcommand{\lra}{{\longrightarrow}}
\DeclareMathOperator{\Def}{\overset{{}_{\text{def}}}{=}}
\newsavebox{\foobox}
\newcommand{\mono}{\!\xymatrix{{}\ar@{^{(}->}[r]&{}}\!}
\newcommand{\epi}{\!\xymatrix{{}\ar@{->>}[r]&{}}\!}
\newcommand{\iso}{\cong}
\begin{document}

\title{Correlations between primes in short intervals\\ on curves over finite fields}

\author{Efrat Bank}
\address{\newline {\bf Efrat Bank} \newline
University of Michigan Mathematics Department \newline
530 Church Street \newline
Ann Arbor, MI 48109-1043 \newline
United States}
\email{\href{mailto:ebank@umich.edu}{ebank@umich.edu}}

\author{Tyler Foster}
\address{\newline {\bf Tyler Foster} \newline
L'Institut des Hautes \'Etudes Scientifiques \newline
Le Bois-Marie, 35 route de Chartres \newline
91440 Bures-sur-Yvette \newline
France}
\email{\href{mailto:foster@ihes.fr}{foster@ihes.fr}}

\date{\today}

\pagestyle{plain}

\pagestyle{plain}

\maketitle

\begin{abstract}

\vskip -.5cm
	We prove an analogue of the Hardy-Littlewood conjecture on the asymptotic distribution of prime constellations in the setting of short intervals in function fields of smooth projective curves over finite fields.
\end{abstract}

\setcounter{tocdepth}{1}

\begin{section}{Introduction}

	In recent work \cite{BB15}, Bary-Soroker and the first author use their work with Rosenzweig \cite{BBR15} on the asymptotic distribution of primes inside short intervals in $\FF_{\!q}[t]$ to establish a natural counterpart, over the field $\FF_{\!q}(t)$, to the still unsolved Hardy-Littlewood conjecture. In \cite{BF17}, the two present authors show how to extend the results of \cite{BBR15} to give asymptotic distributions of primes in short intervals on the complement of a very ample divisor in a smooth, geometrically irreducible projective curve $C$ over a finite field $\FF_{\!q}$. In the present paper, we apply ideas from \cite{BB15} and \cite{BF17} to prove a natural counterpart to the Hardy-Littlewood conjecture on the complement of a very ample divisor in a smooth, geometrically irreducible projective curve $C$ over a finite field $\FF_{\!q}$.
	
\begin{subsection}{The Hardy-Littlewood conjectures for short intervals}
	Let $K$ be a number field, $\mathcal{O}_{K}$ its ring of integers, and $N(h)\Def\#\mathcal{O}_{K}/(h)$ the norm on elements $h\in\mathcal{O}_{K}$. Given an $n$-tuple $\mathbf{\sigma}=(\sigma_1,...,\sigma_n)$ of elements in $\mathcal{O}_{K}$, denote by $\pi_{K,\mathbf{\sigma}}(x)$ the $n$-tuple principal prime counting function
	$$
	\pi_{K,\mathbf{\sigma}}(x)
	\ \Def\ 
	\#\big\{ h\in\mathcal{O}_{K} : 2<N(h)\leq x,\text{ and each }(h+\sigma_i)\subset\mathcal{O}_{K}\text{ is prime}\big\}.
	$$
The Hardy-Littlewood $n$-tuple conjecture \cite{HL23} (henceforth the HL conjecture), asserts that:
	
\begin{conjecture}[Hardy-Littlewood]\label{conj:HL}
\normalfont
	The function $\pi_{\QQ,\sigma}$ satisfies the asymptotic formula
	$$
	\pi_{\QQ,\sigma}(x)\ \sim\ \frak{S}(\sigma)\ \!\frac{x}{(\log x)^n},\qquad x\to\infty
	$$
for a positive constant $\frak{S}(\sigma)$.
\end{conjecture}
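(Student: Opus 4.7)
The plan is a Hardy-Littlewood circle-method attack combined with a sieve-theoretic reduction. The guiding heuristic is that by the Prime Number Theorem a random integer of size $x$ is prime with density $1/\log x$, so if primality of $h+\sigma_1,\ldots,h+\sigma_n$ were independent one would expect density $1/(\log x)^n$ for the tuple event. These events are not in fact independent, but correcting at each rational prime $p$ by replacing the naive local density $(1-1/p)^n$ by the true density $1-\nu_\sigma(p)/p$, where $\nu_\sigma(p)\Def\#\{\sigma_i\bmod p\}$, yields the singular series
\[
\frak{S}(\sigma)\ =\ \prod_{p}\frac{1-\nu_\sigma(p)/p}{(1-1/p)^n},
\]
which converges absolutely and is positive precisely when $\nu_\sigma(p)<p$ for every $p$, i.e.\ when $\sigma$ is admissible; non-admissible tuples force $\pi_{\QQ,\sigma}(x)=O(1)$ and are disposed of at the outset.

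First I would replace the sharp count $\pi_{\QQ,\sigma}(x)$ by the weighted sum $S(x)\Def\sum_{h\leq x}\prod_{i=1}^{n}\Lambda(h+\sigma_i)$, where $\Lambda$ is the von Mangoldt function, and check by partial summation that an asymptotic for $S(x)$ of the form $\frak{S}(\sigma)\,x + o(x)$ is equivalent to Conjecture \ref{conj:HL}. Second, I would expand each $\Lambda(h+\sigma_i)$ via Vaughan's (or Heath-Brown's) identity into a sum of Type-I and Type-II bilinear forms, and carry out a Farey dissection of $[0,1]$ into major arcs around rationals $a/q$ with $q\leq Q$ and a minor-arc complement. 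Third, on the major arcs the Chinese Remainder Theorem decouples the local behavior at each prime $p$, and the Siegel-Walfisz theorem applied to each linear form $h+\sigma_i$ assembles the contribution into an Euler product whose value is exactly $\frak{S}(\sigma)\,x+o(x)$. The fourth and final step would be to show that the minor-arc contribution is $o(x)$, completing the asymptotic.

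The minor-arc estimate is the main obstacle, and embodies the parity problem of Selberg: to produce an asymptotic (and not merely an upper bound of the correct order, which the Selberg sieve delivers within a constant factor) one needs Type-II bilinear cancellation for \emph{simultaneous} primality of $n\geq 2$ linear forms, and this kind of cancellation lies beyond the reach of any purely combinatorial sieve. Under strong auxiliary hypotheses---for instance GRH combined with Elliott-Halberstam-type level-of-distribution conjectures---the required cancellation can be extracted via dispersion-method estimates and Kloosterman-sum inputs of Deshouillers-Iwaniec type, and the full asymptotic follows. Unconditionally, however, I expect this last step to remain intractable for every $n\geq 2$: the parity barrier is precisely the reason Conjecture \ref{conj:HL} is still open, and I would treat it as the genuine bottleneck that any approach must confront.
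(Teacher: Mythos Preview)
The statement you are addressing is Conjecture~\ref{conj:HL}, and the paper does not prove it. Immediately after stating it the authors write that ``despite much numerical verification and many proof attempts, the HL conjecture remains open,'' and they go on to prove only function-field analogues (Theorems~A and~B) by Galois-theoretic and Chebotarev-type methods specific to the large-$q$ regime over $\FF_{\!q}$. There is therefore no ``paper's own proof'' to compare against.

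Your write-up is a reasonable sketch of the standard circle-method heuristic, and you are right that the singular series emerges from the major arcs while the minor-arc Type~II cancellation is the obstruction. But precisely because that obstruction is genuine, what you have written is not a proof: the fourth step---showing the minor-arc contribution is $o(x)$---is not merely difficult but is, as you yourself note, currently unattainable unconditionally for any $n\ge 2$. Invoking GRH plus Elliott--Halberstam does not rescue the argument either: even under those hypotheses no one has established the asymptotic in Conjecture~\ref{conj:HL} for $n\ge 2$ (Elliott--Halberstam feeds into bounded-gap results of Maynard--Tao type, not into the full Hardy--Littlewood asymptotic). So the proposal should be read as an exposition of why the conjecture is plausible and where it is hard, not as a proof strategy that could be completed.
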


	Despite much numerical verification and many proof attempts, the HL conjecture remains open. Results toward the conjecture include the work of Goldston, Pintz and Yildirim \cite{GPY}, Zhang \cite{Zhang} and Maynard \cite{Maynard}. See Graville \cite{Granville2010} for a partial history of the problem.

	One can also formulate short interval variants of the HL conjecture. As in \cite[Discussion before Conjecture~1.3.1]{BF17}, the ambiguity in defining subsets of $\mathcal{O}_{K}$ in terms of the norm on $K$ leads to at least two possible formulations. Fix a $n$-tuple $\sigma=(\sigma_1,...,\sigma_n)$ of elements in $\mathcal{O}_{K}$. Then each real number $1>\varepsilon>0$ determines a family of intervals $I(x,\varepsilon)\Def [x-x^{\varepsilon}, x+x^{\varepsilon}]\subset \RR$, with corresponding prime counting function
	$$
	\pi_{K,\mathbf{\sigma}}(I(x,\varepsilon))
	\ \Def\ 
	\#\big\{ N_{K}(h)\in I(x,\varepsilon):\text{each }(h+\sigma_{i})\subset\mathcal{O}_{K}\text{ is prime}\big\}.
	$$
On the other hand, if $S=\{\mbox{infinite places of }K\}$ and $\varepsilon_{S}=(\varepsilon_{\mathfrak{p}})_{\mathfrak{p}\in S}$ is an $\#S$-tuple of real numbers $1>\varepsilon_{\mathfrak{p}}>0$, then for each $b\in\mathcal{O}_{K}$, we can define
	\begin{equation*}
	I(b,\varepsilon_{S})\Def \big\{ a\in \mathcal{O}_{K} :  |a-b|_{\mathfrak{p}}\le |b|^{\varepsilon_{\mathfrak{p}}}_{\mathfrak{p}}\mbox{ for each }\mathfrak{p}\in S \big\},
	\end{equation*}
with corresponding prime counting function
	$$
	\pi_{K,\sigma}\big(I(b,\varepsilon_{S})\big)
	\ =\ 
	\#\big\{a\in I(b,\varepsilon_{S}):\mbox{ each }(a+\sigma_i)\subset\mathcal{O}_{K}\mbox{ is prime} \big\}.
	$$	
 
\begin{conjecture}\label{conj:HL short interval number fields}
\normalfont
	{\bf (i).} The function $\pi_{K,\mathbf{\sigma}}\left(I(x,\epsilon)\right)$ satisfies
	\begin{equation*}
	\pi_{K,\mathbf{\sigma}}\left(I(x,\epsilon)\right)
	\ \sim\ 
	\frak{S}(\sigma)\frac{\#I(x,\epsilon)}{(\log x)^{n}},\qquad x\to \infty,
	\end{equation*}
where $\frak{S}(\sigma)$ is some positive constant depending on $\sigma$ and on the class number of $K$.

	{\bf (ii).} The function $\pi_{K,\sigma}\big(I(b,\varepsilon_{S})\big)$ satisfies
	$$
	\\pi_{K,\sigma}\big(I(b,\varepsilon_{S})\big)
	\ \ \sim\ \ 
	\frak{S}(\sigma)\frac{\#I(b,\varepsilon_{S})}{(\log N_{\!K\!}(b))^{n}},
	\qquad N_{\!K\!}(b)\to\infty,
	$$
where $\frak{S}(\sigma)$ is some positive constant depending on $\sigma$ and on the class number of $K$.	
	\end{conjecture}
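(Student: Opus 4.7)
The plan is to reduce both parts of Conjecture 1.2 to classical Hardy-Littlewood-type inputs over $\QQ$ via sieve methods on $\mathcal{O}_K$. Since taking $K = \QQ$ in part (i) already recovers Conjecture 1.1, any unconditional proof necessarily settles the classical HL conjecture; the realistic target is therefore the conditional implication ``Conjecture 1.1 for $\QQ$ $\Longrightarrow$ Conjecture 1.2 for $K$,'' with the additional goal of producing the singular series $\mathfrak{S}(\sigma)$ and the class-number dependence in an intrinsic, adelic form.

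For part (i), I would first decompose $\pi_{K,\sigma}(I(x,\varepsilon))$ as a sum over the ideal class group of $\mathcal{O}_K$: in each class, principal ideals of norm in $[x - x^\varepsilon, x + x^\varepsilon]$ correspond, after quotienting by the unit group, to lattice points of $\mathcal{O}_K$ in a thin annular region of Minkowski space. Hecke equidistribution of ideal classes reduces the problem, in each class, to counting $h \in \mathcal{O}_K$ of prescribed norm with each $(h + \sigma_i)$ prime, at which point a standard local-to-global analysis of the density at each finite place of $K$ produces $\mathfrak{S}(\sigma)$ as a convergent product, while the dependence on the class number is encoded in the coset sum. For part (ii), the set $I(b,\varepsilon_S)$ is already a box in $\prod_{\mathfrak{p} \in S} K_\mathfrak{p}$, so the lattice-point count is simpler and more rigid; here I would apply a Selberg sieve adapted to $\mathcal{O}_K$ to obtain the upper bound of the predicted order, and then appeal to the $K$-analogue of Conjecture 1.1 (essentially by projecting to $\QQ$ via the norm and tracking the fibers) to extract the matching lower bound and the constant $\mathfrak{S}(\sigma)$.

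The main obstacle, and it is decisive, is the parity barrier inherent to any $n$-tuple prime problem with $n \geq 2$: sieve methods alone cannot separate integers with an even from an odd number of prime factors, which is precisely why Conjecture 1.1 remains open and why the partial results of \cite{GPY}, \cite{Zhang}, and \cite{Maynard} deliver bounded gaps rather than full asymptotics. By contrast, in the function field setting pursued in the body of this paper, equidistribution of Frobenius acting on $\ell$-adic cohomology of explicit parameter spaces over $\FF_{\!q}$, together with the large finite monodromy inputs of \cite{BBR15} and \cite{BF17}, sidesteps the parity barrier entirely. This is why one can realistically expect an unconditional function-field analogue of Conjecture 1.2 on $C / \FF_{\!q}$, while the number-field version stated above must remain conjectural.
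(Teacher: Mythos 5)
The statement you were asked to prove is Conjecture~\ref{conj:HL short interval number fields}, which the paper does \emph{not} prove: it is stated as an open conjecture, a short-interval refinement of the Hardy--Littlewood $n$-tuple conjecture over a number field $K$, supported only by the numerical evidence of Gross and Smith cited in the surrounding text. The paper's actual contribution (Theorem~\ref{theorem: main theorem}) is an unconditional analogue over function fields of curves $C/\FF_{\!q}$ in the large-$q$ limit, precisely because the Galois/monodromy machinery available there circumvents the obstruction present over $\QQ$ and $K$.

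You correctly recognized this. Your observation that setting $K=\QQ$ in part~(i) recovers Conjecture~\ref{conj:HL} itself, so that any unconditional proof of the statement would resolve the classical Hardy--Littlewood conjecture, is exactly the right sanity check, and your identification of the parity barrier as the decisive obstruction to any sieve-based attack is the standard and correct explanation for why the conjecture remains open. Your proposed reduction of part~(i) to a sum over ideal classes, with lattice-point counts in thin annular regions of Minkowski space and a local-to-global analysis producing the singular series $\mathfrak{S}(\sigma)$, is a plausible sketch of how one \emph{would} organize a conditional argument deriving Conjecture~\ref{conj:HL short interval number fields} from a suitable $K$-version of Conjecture~\ref{conj:HL}, and your remarks on part~(ii) and on why the function-field analogue succeeds (equidistribution of Frobenius, large monodromy from \cite{BBR15} and \cite{BF17}) are faithful to the paper's own narrative. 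In short: there is no gap to flag in your reasoning, because you did not claim a proof; you correctly reported that none exists, explained why, and contrasted the situation with the function-field case that the paper actually handles. The one caveat worth keeping in mind is that even the conditional implication you sketch (Hecke equidistribution plus local densities plus a Selberg sieve) would require genuine work to make rigorous, particularly the uniformity in the class-group coset sum and the control of unit-group contributions in the annular count; you are right to present it as a plan rather than a proof.
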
 

Gross and Smith \cite{gross2000} present numeric evidence for Conjecture \ref{conj:HL short interval number fields}.(i) for certain number fields and \textit{reasonable sets}, which are regions that may be interpreted as short intervals. Based on this, they present a conjecture similar to Conjecture \ref{conj:HL short interval number fields}.(i) above.
	
	Notice that when $K=\QQ$, then $S=\{\infty\}$ and Conjectures \ref{conj:HL short interval number fields}.(i) and (ii) coincide. Natural analogues of Conjecture \ref{conj:HL} and \ref{conj:HL short interval number fields} also hold when we replace $\QQ$ with field of rational functions $\FF_{\!q}(t)$ over a finite field $\FF_{\!q}$ in the large $q$ limit. For $f\in\FF_{\!q}[t]$ monic and $\varepsilon>0$ a real number, define
	$
	I(f,\varepsilon)
	\Def
	\big\{h\in\FF_{\!q}[t] : | h-f| \leq | f |^{\varepsilon}   \big\}
	$,
with $|f|\Def q^{\deg f}$. For $n$-tuples $\mathbf{f}=(f_1,...,f_n)$ of distinct polynomials $f_{i}$ and distinct polynomials in $\FF_{\!q}[t]$, define
	\begin{equation}\label{eq: prime poly counting func}
	\pi_{q,\mathbf{f}}(k)
	\ \Def\ 
	\# \big\{ h\in \FF_{\!q}[t]: h\text{ monic of degree } k\text{, and each } h+f_{i}\text{ is a prime polynomial}\big\};
	\end{equation}
	$$
	\begin{array}{rcl}
	\pi_{q,\mathbf{f}}\big(I(f,\varepsilon)\big)
	&
	\!\!\Def\!\!
	&
	\#\big\{h\in I(f,\varepsilon):\text{ each } h+f_{i}\text{ is a prime polynomial}\big\}.
	\end{array}
	$$
	
\begin{theorem}\label{conj:HLPolyAnalogue}
	\normalfont
	Fix an integer $B>n$. Then:
	
	{\bf (i)} {(Pollack \cite{Pollack-twinPrime, PrimeSpecialization_Pollack2008}, Bary-Soroker \cite{HL_Lior2014}, Carmon \cite{Carmon_Char2}).}
	The function $\pi_{q,\bold{f}}(k)$ satisfies
	$$
	\pi_{q,\mathbf{f}}(k)\ =\ \frac{q^k}{k^n}\Big(1 + O_{B}(q^{-1/2})\Big)
	$$
uniformly in $f_1,...,f_n$ with degrees bounded by $B$ as $q\lra \infty$.
\vskip .2cm
	
	{\bf (ii)}
{(Bank \& Bary-Soroker \cite[Theorem 1.1]{BB15}).} The function $\pi_{q,\mathbf{f}}(I(f_0,\varepsilon))$ satisfies
	
	$$
	\pi_{q,\mathbf{f}}(I(f_0,\varepsilon))
	\ =\ \frac{\# I(f_0,\varepsilon)}{\underset{i=1}{\overset{n}{\mbox{{\larger\larger$\prod$}}}}\deg(f_0+f_{i})}\Big( 1 + O_{B}(q^{-1/2})\Big)
	$$
uniformly for all $f_{1},\dots,f_{n}$ of degree at most $B$, and for all monic polynomials $f_0$ satisfying $\frac{2}{\varepsilon}<\deg f_0<B$, as $q\lra \infty$ an odd prime power.
	\end{theorem}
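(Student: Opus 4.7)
The plan is to treat both parts uniformly by fibering the family of candidate $h$'s over $\FF_{\!q}$ and applying an effective function-field Chebotarev density theorem. For part (i), the parameter space $X$ is the affine space $\A^{k}$ whose $\FF_{\!q}$-points are the monic $h\in\FF_{\!q}[t]$ of degree $k$; for part (ii), $X$ is the affine subspace of $\A^{\deg f_0}$ cut out by $|h-f_0|\le |f_0|^{\varepsilon}$, which is itself an affine space of dimension $\lfloor\varepsilon\deg f_0\rfloor$. Viewing the coordinates on $X$ as indeterminates, one forms the $n$ generic polynomials $g_i(t):=h(t)+f_i(t)\in\FF_{\!q}[X][t]$, lets $L_i/\FF_{\!q}(X)$ be a splitting field of $g_i$, and sets $L:=L_1\cdots L_n$. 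Primality of the specialization $h(t)+f_i(t)$ at an $\FF_{\!q}$-point of $X$ is equivalent to the Frobenius conjugacy class in $\Gal(L_i/\FF_{\!q}(X))$ acting as a $k_i$-cycle on the roots of $g_i$, where $k_i:=\deg(h+f_i)$; the joint condition that \emph{all} $h+f_i$ be prime then corresponds to a specified product conjugacy class $C\subset \Gal(L/\FF_{\!q}(X))$.

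The central computation is to show that the geometric Galois group is as large as possible, namely $\Gal\!\big(L\overline{\FF_{\!q}}/\overline{\FF_{\!q}}(X)\big)=\prod_{i=1}^{n}S_{k_i}$. This splits into two sub-claims: each individual geometric Galois group $\Gal(L_i\overline{\FF_{\!q}}/\overline{\FF_{\!q}}(X))$ equals $S_{k_i}$, and the $L_i$'s are linearly disjoint over $\overline{\FF_{\!q}}(X)$. For part (i), where $X=\A^{k}$ and the $g_i$ are as generic as possible, these two facts are established by Pollack \cite{Pollack-twinPrime} and Bary-Soroker \cite{HL_Lior2014}, with the odd-characteristic subtlety handled by Carmon \cite{Carmon_Char2}. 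For part (ii), where $X$ is the much smaller affine subspace carved out by the short-interval condition, the analogous computation is the main technical content of Bank and Bary-Soroker \cite{BB15}: one specializes from the generic family on $\A^{\deg f_0}$ down to $X$ in a way that preserves the Galois group, and verifies linear disjointness through explicit discriminant estimates.

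With the geometric Galois group in hand, the count follows from the effective Chebotarev density theorem for function fields (Deligne's Riemann Hypothesis for curves, applied to the appropriate \'etale cover of $X$):
$$
\#\big\{x\in X(\FF_{\!q}):\mathrm{Frob}_x\in C\big\}
\ =\ \frac{|C|}{\big|\Gal(L\overline{\FF_{\!q}}/\overline{\FF_{\!q}}(X))\big|}\cdot\#X(\FF_{\!q})+O_B\!\left(q^{\dim X-1/2}\right).
$$
Since the $k_i$-cycles form a single conjugacy class of size $(k_i-1)!$ in $S_{k_i}$ and $|S_{k_i}|=k_i!$, the density factor collapses to $\prod_i 1/k_i$, yielding $q^{k}/k^{n}$ in part (i) and $\#I(f_0,\varepsilon)/\prod_{i}\deg(f_0+f_i)$ in part (ii).

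The main obstacle, most acutely in part (ii), is the Galois group computation. The interval $I(f_0,\varepsilon)$ is a \emph{thin} affine subspace inside the full space of monic polynomials of degree $\deg f_0$, so the Galois group of the restricted family could a priori collapse to a proper subgroup of $\prod_i S_{k_i}$. Ruling this out, together with ruling out spurious coincidences among the $g_i$'s that would violate linear disjointness, is where the hypotheses $\deg f_i\le B$, $\tfrac{2}{\varepsilon}<\deg f_0<B$, and $q$ an odd prime power enter the argument, and is the locus of the bulk of the technical work.
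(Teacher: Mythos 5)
Note first that the paper does not actually prove Theorem~\ref{conj:HLPolyAnalogue}; it \emph{cites} it, as the parenthetical attributions in the statement make explicit (Pollack, Bary-Soroker, and Carmon for part (i); Bank and Bary-Soroker for part (ii)). The present paper's contribution begins with Theorem~\ref{theorem: main theorem}, which generalizes part (ii) to curves of arbitrary genus. So there is no ``paper's own proof'' of the statement you were asked about. That said, your sketch does correctly reproduce the strategy used in the cited references and generalized here: parametrize the candidates $h$ by an affine space $X$ over $\FF_{\!q}$, form the generic polynomial $\mathcal{F}_{\mathbf{A}}$ (your $h(t)$ with indeterminate coefficients), compute that the geometric Galois group of $\prod_{i}L_{i}(\mathcal{F}_{\mathbf{A}})$ is the full product $\prod_{i}S_{k_{i}}$ via an irreducibility statement plus linear disjointness of splitting fields, and then deduce the count from an explicit function-field Chebotarev theorem with square-root error. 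This is exactly the shape of the argument in \S\ref{sec:gal group calculation}--\S\ref{sec: Counting argument} for the curve setting, and your identification of the Galois group computation over the ``thin'' short-interval slice as the technical crux matches where the real work lies (Proposition~\ref{proposition: no solutions exist}, Corollary~\ref{cor: linearly disjoint}).

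Two small inaccuracies worth flagging. First, the parameter space $X$ for the short-interval count has dimension $\lfloor\varepsilon\deg f_{0}\rfloor+1$, not $\lfloor\varepsilon\deg f_{0}\rfloor$: the interval is an $f_{0}$-translate of the space of polynomials of degree $\le\lfloor\varepsilon\deg f_{0}\rfloor$, whose basis $\{1,t,\dots,t^{\lfloor\varepsilon\deg f_{0}\rfloor}\}$ has $\lfloor\varepsilon\deg f_{0}\rfloor+1$ elements; this is what produces $\#I(f_{0},\varepsilon)=q^{\lfloor\varepsilon\deg f_{0}\rfloor+1}$ in the numerator of the main term. Second, your attribution ``odd-characteristic subtlety handled by Carmon'' is backwards: Pollack and Bary-Soroker treat odd characteristic, and Carmon's contribution was to extend to characteristic $2$. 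Indeed part (ii) restricts to $q$ odd precisely because the short-interval Galois computation in that case relies on a linear-disjointness criterion (comparing discriminants of squarefree resolvents) that breaks in characteristic $2$, which is the same restriction that appears in Theorem~\ref{theorem: main theorem} here.
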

	
\end{subsection}

	
\begin{subsection}{Main results: Correlation of primes in short intervals on curves}\ 
		Let $C$ be a smooth projective geometrically irreducible curve over a finite field $\FF_q$. Fix an effective divisor $E=m_1\mathfrak{p}_{1}+\cdots +m_s\mathfrak{p}_{s}$ on $C$. Let $\sigma\ \Def\ (\sigma_1,...,\sigma_n)$ be an $n$-tuple of rational functions $\sigma_{i}$ on $C$, regular on $C\backslash E\Def C\backslash\text{supp}(E)$. Define an $n$-tuple principal prime counting function 
	\begin{equation}\label{eq: n-tuple prime counting func on curves}
	\pi_{C,\mathbf{\sigma}}(E)
	\ \ \Def\ \ 
	\#\left\{\!\!
	\begin{array}{c}	
	h\in K(C)\mbox{ such that }\text{div}(h)_{-}=E,\mbox{ and each}
	\\
	\text{function }h+\sigma_1, \dots, h+\sigma_n\mbox{ generates  a prime}
	\\
	\text{ideal in the ring of regular functions on }  C\backslash E
	\end{array}
	\!\!\right\}.
	\end{equation}
\begin{remark}
	The condition that $\text{div}(h)_{-}=E$ should be thought of as analogous to the condition $\deg h=k$ in \eqref{eq: prime poly counting func}. In the special case where $C=\PP^{1}$, the ring of rational functions on $C$ regular away from the point $\infty\in\PP^{1}$ is the polynomial ring $\FF_{\!q}[t]$. Thus when $E=\infty$, the function $\pi_{C,\sigma}$ defined in \eqref{eq: n-tuple prime counting func on curves} reduces to the function $\pi_{q,\bold{f}}$ defined in \eqref{eq: prime poly counting func}.
\end{remark}

\begin{conjecture}\label{conj:HLhighergenusAnalogue}
	\normalfont
	The function $\pi_{C,\sigma}(E)$ satisfies
	$$
	\pi_{C,\sigma}(E)\ \sim\ \frak{S}(\sigma)\ \!\frac{q^{\text{deg}_{\ \!}E}}{(\text{deg}_{\ \!}E)^n},\qquad q \to \infty
	$$
for a positive constant $\frak{S}(\sigma)$ depending on $\sigma$, uniformly in $\sigma_1,...,\sigma_n$ with bounded degree.
\end{conjecture}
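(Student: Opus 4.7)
The plan is to adapt the Galois-theoretic strategy of Bank--Bary-Soroker in \cite{BB15} to the curve setting developed in \cite{BF17}. First, parameterize the set of $h\in K(C)$ with $\text{div}(h)_{-}=E$ as a Zariski-open subvariety $U\subset \PP(H^{0}(C,\mathcal{O}_{C}(E)))$; by Riemann--Roch and Lang--Weil one has $\#U(\FF_{\!q})=q^{\deg E}(1+O(q^{-1/2}))$, with implicit constant depending only on $g(C)$ and $\deg E$. Each $\sigma_{i}$ determines, via $h\mapsto h+\sigma_{i}$, a family of effective divisors $\text{div}(h+\sigma_{i})_{+}$ of degree $\deg E$ on $C\setminus E$ parameterized by $U$. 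Let $X_{i}\to U$ be the finite \'etale cover whose fiber over $h$ is the support of $\text{div}(h+\sigma_{i})_{+}$, and let $\tilde X_{i}\to U$ denote its Galois closure, with Galois group $G_{i}$.

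Second, compute the joint Galois group of the compositum $\tilde X_{1}\cdots\tilde X_{n}\to U$. The single-variable analysis from \cite{BF17} shows $G_{i}=S_{\deg E}$. The essential new input is proving that the total Galois group is the full product $\prod_{i=1}^{n}G_{i}=S_{\deg E}^{n}$. This reduces to showing that the covers $\tilde X_{i}$ are pairwise linearly disjoint over $U$, which should follow from the distinctness of the $\sigma_{i}$: at a carefully chosen boundary place one exhibits inertia generators acting as a single transposition on $\tilde X_{i}$ and trivially on $\tilde X_{j}$ for $j\neq i$, in the spirit of the Bary-Soroker linear disjointness criterion used in \cite{BB15}.

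Third, apply an effective function-field Chebotarev theorem---obtained from Deligne's equidistribution theorem, or from Lang--Weil applied to the fiber product $\tilde X_{1}\times_{U}\cdots\times_{U}\tilde X_{n}\to U$---to count those $h\in U(\FF_{\!q})$ at which the geometric Frobenius acts as an $n$-tuple of single $(\deg E)$-cycles. Since the density of $(\deg E)$-cycles in $S_{\deg E}$ is $1/(\deg E)$, and since the $n$ actions are independent by the previous step, this gives
\[
\pi_{C,\sigma}(E)\ =\ \mathfrak{S}(\sigma)\,\frac{q^{\deg E}}{(\deg E)^{n}}\bigl(1+O(q^{-1/2})\bigr),
\]
where the positive constant $\mathfrak{S}(\sigma)$ absorbs the local factors at $\text{supp}(E)$ and at places where the relative geometry of the $\sigma_{i}$ is degenerate. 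The error term is uniform for bounded-degree $\sigma_{1},\ldots,\sigma_{n}$ because the covers then vary in a bounded-degree algebraic family.

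The main obstacle will be the joint Galois group calculation. For $C=\PP^{1}$ this was handled in \cite{BB15} by explicit polynomial manipulation, but on a general curve the parameter $h$ lives in a Riemann--Roch space $L(E)$ rather than in the polynomial ring $\FF_{\!q}[t]$, and the explicit root-swapping inertia generators used there must be replaced by geometric generators coming from the boundary of a suitable compactification of $U$. The delicate point will be verifying that these inertia elements generate the full $S_{\deg E}^{n}$ even in the presence of arithmetic coincidences among the $\sigma_{i}$, that is, ruling out unexpected collapses of the Galois group that would force $\mathfrak{S}(\sigma)$ to vanish.
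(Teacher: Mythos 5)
This statement is explicitly labeled a conjecture in the paper and is not proved there; the paper's Theorem \ref{theorem: main theorem} treats the short-interval count $\pi_{C,\sigma}(I(f_0,E))$, which is the analogue of Conjecture \ref{conj:HL short interval number fields}, not of Conjecture \ref{conj:HLhighergenusAnalogue}. So there is no proof in the paper to compare against, and your proposal has to stand on its own as an attempt at an open problem.

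It does not close the gap. The critical claim, that ``the single-variable analysis from \cite{BF17} shows $G_i = S_{\deg E}$'' over the parameter space $U = \{h : \text{div}(h)_- = E\}$, is precisely what the existing technology does not give you. Both \cite{BF17} and the Galois computations in \S\ref{sec:gal group calculation} of this paper work with the generic \emph{short-interval} element $\mathcal{F}_{\mathbf{A}} = f_0 + A_0 + \sum_j A_j g_j$, where a fixed $f_0$ with poles strictly deeper than $E$ forces the pole divisor of $\mathcal{F}_{\mathbf{A}}$ to equal $\text{div}(f_0)_-$ for every value of $\mathbf{A}$; this constancy is what makes the degree of $\text{div}(L_i(\mathcal{F}_{\mathbf{A}}))_+$ well-defined across the whole family and is invoked via the inequality \eqref{equation: inequality for sum} before every citation of \cite{BF17} in Lemma \ref{lemma: separability for one linear polynomial} and Corollary \ref{cor: linearly disjoint}. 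Over your $U$ the generic element is $A_0 + \sum_j A_j g_j$ with no dominating term: the pole order at each $\mathfrak{p}_i$ drops on the hyperplane where the top coefficient vanishes, the family of zero divisors is not proper over the full Riemann--Roch space, and neither the cited lemmas nor the discriminant analysis of Proposition \ref{proposition: no solutions exist} (which also needs $E \ge 3E_0$ and $\text{char}\,\FF_{\!q}\ne 2$) applies as stated. Establishing $G_i \cong S_{\deg E}$ and the full product structure $\prod_i G_i$ in this setting is the actual open content of the conjecture; replacing the paper's discriminant argument with ``inertia generators at a boundary place'' is a reasonable strategy to try, but you have not exhibited the boundary, the inertia elements, or why they act as claimed on each $\tilde X_i$. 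Finally, your Chebotarev count would produce exactly $\frac{q^{\deg E}}{(\deg E)^n}$ with no extra factor, so you should either conclude $\mathfrak{S}(\sigma) = 1$ and say so, or identify what local obstruction would make it differ, rather than asserting that an unspecified constant ``absorbs the local factors.''
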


\begin{definition}
\normalfont
	Given a regular function $f$ on $C\backslash E$, the {\em interval} ({\em of size $E$ around $f$}) is the set
	$$
	\ \ \ \ \ \ \ \ \ 
	\begin{array}{rcl}
	I(f,E)
	&
	\!\!\Def\!\!
	&
	\left\{\!\!
	\begin{array}{c}
	\mbox{regular functions $h$ on }C\backslash E\ \text{such}
	\\
	\mbox{that }\nu_{\mathfrak{p}_{i}}(h-f)\ge -m_{i}\ \mbox{ for all }\ 1\le i\le s
	\end{array}
	\!\!\right\}
	\\[15pt]
	&
	\!\!=\!\!
	&f+H^{0}\big(C,\mathscr{O}(E)\big).
	\end{array}
	$$
The interval $I(f,E)$ is a \textit{short interval} if the order of the pole of $f$ at each $\mathfrak{p}_{i}$ is at least $m_i$, and strictly greater than $m_{i}$ for at least one $\mathfrak{p}_{i}$. Define
	$$
	\pi_{C,\mathbf{\sigma}}(I(f,E))
	\ \ \Def\ \ 
	\#\left\{\!\!
	\begin{array}{c}
	\text{$h\in I(f,E)$ such that $h+\sigma_1,...,h+\sigma_n$ generate prime}
	\\
	\text{ideals in the ring of regular functions on } C\backslash E
	\end{array}
	\!\!\right\}.
	$$
\end{definition}

	Our main result is an analogue of Conjecture \ref{conj:HL short interval number fields} that extends Theorem \ref{conj:HLPolyAnalogue}.(ii) to curves of arbitrary genus over $\FF_{\!q}$. Fix a smooth projective geometrically irreducible curve $C$ over $\FF_{q}$. Let $E=m_{1}\mathfrak{p}_{1}+\cdots+m_{r}\mathfrak{p}_{r}$ be an effective divisor on $C$, and let $f_0,\sigma_{1},...,\sigma_{n}$ be distinct regular functions on $C\backslash E$ satisfying $-\nu_{\mathfrak{p}}(f_0)> m_{\mathfrak{p}}$ and $\nu_{\mathfrak{p}}(f_0) \neq \nu_{\mathfrak{p}}(\sigma_{i})$ for each $1\le i\le n$.

\begin{Th}\label{theorem: main theorem}
	\normalfont
	Fix an integer $B>n$. If $\text{char}_{\ \!}\FF_{\!q}\ne2$ and $E\ge 3E_{0}$ for some effective divisor $E_{0}$ on $C$ with $\text{deg}_{\ \!}E_{0}\ge2g+1$, then the asymptotic formula
	$$
	\pi_{C,\sigma}\big(I(f_{0},E)\big)
	\ =\ 
	\frac{\#I(f_{0},E)}{\prod_{i=1}^{n}\deg (f_0+\sigma_{i})}\Big(1+O_{B}\big(q^{-1/2}\big)\Big)
	$$
holds uniformly for all $E$ and $f_{0},\sigma_{1},\dots,\sigma_{n}$ as above satisfying $\text{deg}\big(\text{div}(f_{0}+\sigma_{i})\big|_{E}\big)<B$, and as $q\lra \infty$ an odd prime power.
\end{Th}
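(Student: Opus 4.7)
The plan is to follow the monodromy strategy of \cite{BB15}, lifted to the curve setting of \cite{BF17}: parametrize the short interval as an affine space, attach an $n$-tuple of incidence covers, compute the full joint geometric monodromy group, and conclude via the function-field Chebotarev theorem / Weil bound.

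Set $V \Def H^0(C, \mathscr{O}(E))$, so that $I(f_0, E) = f_0 + V$ becomes the affine space $\mathbb{A}(V)$ of dimension $N = \deg E - g + 1$ (Riemann--Roch applies since $\deg E \ge 3(2g+1) > 2g - 2$). Write $d_i \Def \deg(f_0 + \sigma_i)$. The hypotheses $-\nu_{\mathfrak{p}}(f_0) > m_{\mathfrak{p}}$ and $\nu_{\mathfrak{p}}(f_0) \ne \nu_{\mathfrak{p}}(\sigma_i)$ force $(f_0 + \sigma_i + g)_- = (f_0)_-$ uniformly in $g \in V$, so each function $h_g^{(i)} \Def f_0 + \sigma_i + g$ has a zero divisor $Z_i(g)$ of constant degree $d_i$ disjoint from $\mathrm{supp}(E)$, and $h_g^{(i)}$ generates a prime ideal in $\mathcal{O}(C \backslash E)$ precisely when $Z_i(g)$ is a single closed point. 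Hence $\pi_{C,\sigma}(I(f_0, E))$ counts those $g \in V(\FF_{\!q})$ for which Frobenius acts as a $d_i$-cycle on the geometric fiber over $g$ of the $d_i$-sheeted incidence cover
$$
\Sigma_i \ \Def\ \bigl\{(p, g) \in (C \backslash E) \times \mathbb{A}(V) : h_g^{(i)}(p) = 0\bigr\} \ \lra\ \mathbb{A}(V)
$$
for every $1 \le i \le n$ simultaneously.

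The heart of the proof is to show that the geometric monodromy group of $\Sigma_1 \times_{\mathbb{A}(V)} \cdots \times_{\mathbb{A}(V)} \Sigma_n \to \mathbb{A}(V)$ is the full product $G \Def \prod_{i=1}^n S_{d_i}$. For each single factor, \cite{BF17} already establishes $\mathrm{Gal}(\Sigma_i / \mathbb{A}(V)) = S_{d_i}$ under exactly the hypothesis $E \ge 3E_0$, $\deg E_0 \ge 2g+1$, and $\mathrm{char}(\FF_{\!q}) \ne 2$: one copy of $E_0$ supplies base-point-freeness for a Bertini argument establishing geometric connectedness (hence transitivity) of $\Sigma_i$, a second copy of $E_0$ furnishes a pencil in $|E|$ with a single simple double-ramification point (producing a transposition), and the odd-characteristic hypothesis ensures that this ramification is tame of order two. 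To promote these factor-wise groups to the full product, I would adapt the linear-disjointness argument of \cite[\S3]{BB15} via Goursat: since each $S_{d_i}$ is generated by transpositions and for $d_i, d_j \ge 5$ the only nontrivial common quotient of $S_{d_i}$ and $S_{d_j}$ is $\ZZ/2$ (through the sign character), it suffices to (i) show that the discriminants of the $\Sigma_i$'s generate pairwise linearly disjoint quadratic extensions of $\FF_{\!q}(V)$, which follows from the distinctness of the pole data enforced by $\nu_{\mathfrak{p}}(f_0) \ne \nu_{\mathfrak{p}}(\sigma_i)$; and (ii) produce, for each $i$, a one-parameter family in $V$ realizing a transposition in $\mathrm{Gal}(\Sigma_i)$ along which each $\Sigma_j$, $j \ne i$, remains unramified. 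Step (ii) uses the third copy of $E_0$ in the decomposition $E = 3E_0 + (E - 3E_0)$ to provide enough independent directions in $V$ to perturb one incidence cover while holding the other $n-1$ covers fixed.

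With $G$ computed, the function-field Chebotarev density theorem, obtained by applying Deligne's Riemann hypothesis to a smooth compactification of the Galois closure of the composite cover, yields
$$
\pi_{C,\sigma}\bigl(I(f_0, E)\bigr) \ =\ \#V(\FF_{\!q})\cdot \prod_{i=1}^n \frac{(d_i - 1)!}{d_i!} \cdot \bigl(1 + O_B(q^{-1/2})\bigr) \ =\ \frac{\#I(f_0, E)}{\prod_{i=1}^n d_i}\bigl(1 + O_B(q^{-1/2})\bigr),
$$
with the uniformity in $B$ following from a uniform bound on the degree of the Galois closure in terms of $\sum_i d_i \le nB$. The main obstacle will be the joint monodromy calculation, specifically the linear-disjointness step (ii): on $\PP^1$ the parameter space has dimension $\deg E + 1$ and independent perturbations are essentially explicit, while on a curve of positive genus $V$ has dimension only $\deg E - g + 1$, so producing $n$ independent families decoupling the covers is what forces the generous hypothesis $E \ge 3E_0$ with $\deg E_0 \ge 2g + 1$. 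Small-degree cases $d_i \in \{2, 3, 4\}$, where the sign character is not the unique common quotient, will likely require a separate case-by-case argument as in \cite{BB15}.
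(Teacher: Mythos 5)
Your proposal follows the same high-level strategy as the paper's \S\ref{sec:gal group calculation}--\S\ref{sec: Counting argument}: parametrize the short interval as affine $(m+1)$-space, invoke \cite{BF17} to get per-factor geometric monodromy $S_{k_i}$, establish linear disjointness of the $n$ splitting fields to obtain the full product $\prod_i S_{k_i}$, and finish with an explicit Chebotarev estimate. A few of your sketched details, however, diverge from what actually carries the argument. First, the pairwise linear disjointness of the discriminant quadratics does \emph{not} follow from the pole-order condition $\nu_{\mathfrak{p}}(f_0)\ne\nu_{\mathfrak{p}}(\sigma_i)$ (that condition is used earlier, for separability of each single cover); the paper proves coprimality of the discriminants $D_1,D_2$ via Lemma \ref{lem:disc prime iff sys of equations} and Proposition \ref{proposition: no solutions exist}, and the diagonal case $x=y$ there relies on the fact that $\sigma_1$ and $\sigma_2$ are distinct \emph{as functions on the curve}, not on any pole-order comparison. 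Second, your step (ii) --- producing a transposition in $\Gal(\Sigma_i)$ that leaves the other covers unramified --- is not a separate necessary ingredient once the quadratics are disjoint: the paper cites \cite[Lemma 3.3]{BarySoroker2012}, which deduces linear disjointness of $S_{k_i}$-splitting fields directly from disjointness of the $A_{k_i}$-fixed quadratics, and this handles the small-degree cases in one stroke rather than requiring a separate Goursat case analysis. Third, the role of $E\ge 3E_0$ is more concrete than supplying $n$ independent perturbation directions in $V$: in Proposition \ref{proposition: no solutions exist} the hypothesis furnishes a linear form $t$ (from the projective embedding by $E_0$) such that $1,t,t^2,t^3$ all lie in $H^0(C,\mathscr{O}(E))$, which is exactly what makes the $B_3$-coefficient of $\det(u,v)$ nonzero and hence $\det(u,v)\not\equiv 0$.
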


\end{subsection}

	
\begin{subsection}{Outline of paper}
	Our strategy in proving Theorem \ref{theorem: main theorem} is similar in spirit to \cite{BB15}. In \S\ref{sec:gal group calculation} we briefly review the necessary divisor theory and show that the splitting fields of distinct linear functions, evaluated at the generic element $\mathcal{F}_{\!\bold{A}}$ of a short interval, are linearly disjoint. 
We use this to show that the Galois group of the product of several linear functions evaluated at $\mathcal{F}_{\!\bold{A}}$ is isomorphic to a direct product of symmetric groups. In \S\ref{sec: Counting argument} we use a Chebotarev-type density theorem to estimate $\pi_{C,\sigma}\big(I(f_{0},E)\big)$. In \S\ref{Proof of Theorem A}, we prove the main Theorem \ref{theorem: main theorem} as a special case of the more general Theorem~\ref{theorem: main theorem factorization type} , which deals with general factorization types.

\end{subsection}

	
\begin{subsection}{Acknowledgments}
	The authors would like to thank Lior Bary-Soroker for suggesting a version of this problem. We would like to thank Jeff Lagarias and Mike Zieve for many helpful conversations. We also extend a warm thank you to Jordan Ellenberg, Alexei Entin, and Zeev Rudnick for helpful perspectives. 
	
	 The research that lead to this paper was conducted while the first author was at the University of Michigan and while  the  second  author  was  a  visiting  researcher  at  L`Institut  des  Hautes \'Etudes Scientifiques and at L'Institut Henri Poincar\'e. The first author thanks the AMS-Simons Travel Grant for supporting her visit to IHES. The second author thanks Le Laboratoire d`Excellence CARMIN for their financial support.

\end{subsection}
	
\end{section}


\begin{section}{Galois group calculation}\label{sec:gal group calculation}

\begin{subsection}{Relevant background}\label{subsection: relevant background}
	We make use of the theory of divisors on algebraic varieties. Necessary background appears in \cite[\S2]{BF17}, with further details in \cite[§II.6]{Hartshorne}. For each point $\mathfrak{p}\in C$, let $\nu_{\mathfrak{p}}$ denote valuation at $\mathfrak{p}$, applied to both functions and divisors on $C$.
	
	For the remainder of the present \S\ref{sec:gal group calculation}, we fix an effective very ample divisor $E$ on $C$ and a function $f_0$ regular on $C\backslash E$ satisfying
	$$
	-\nu_{\mathfrak{p}}(f_0)
	\ >\ 
	\nu_{\mathfrak{p}}(E)
	\ \ \ \ \ \ \mbox{for all}\ \ \ \mathfrak{p}\in\text{supp}(E).
	$$
We require a somewhat stronger positivity condition on $E$ than ``effective and very ample." Namely, assume that there exists a divisor $E_{0}$ on $C$ such that
	\begin{equation}\label{equation: thrice very ample}
	E\ \ge\ 3E_{0}.
	\end{equation}
	
	Observe however that if $E$ is effective and very ample but fails to satisfy \eqref{equation: thrice very ample}, then we can replace $E$ by $3E$ to achieve \eqref{equation: thrice very ample}.
Let $\sigma=(\sigma_1,...,\sigma_n)$ be an $n$-tuple of distinct rational functions on $C$, each regular on $C\backslash E$, and each satisfying the inequalities
	\begin{equation}\label{equation: key condition on sigma_i}
	\nu_{\mathfrak{p}}(\sigma_{i})
	\ \neq\ 
	\nu_{\mathfrak{p}}(f_{0})
	\ \ \ \mbox{and}\ \ \ 
	-\nu_{\mathfrak{p}}(\sigma_{i})
	\ >\ 
	\nu_{\mathfrak{p}}(E),
	\ \ \ \ \ \ \mbox{for all}\ \ \ \mathfrak{p}\in\text{supp}(E).
	\end{equation}
For each $\sigma_i$, define the monic linear polynomial 
	$$
	L_i(X)\ \Def\ \sigma_{i}+X
	\ \ \ \ \ \ \text{in}\ \ \ \ \ \ 
	\FF_{\!q}(C)[X].
	$$
	
	Let $R$ denote the ring of regular functions on $C\backslash E$. Fix a basis $\{1,g_{1},...,g_{m}\}$ of $H^{0}(C,\mathscr{O}(E))$ once and for all, let $\FF_{\!q}(\bold{A})=\FF_{\!q}(A_{0}, . . . , A_{m})$ denote the field of rational functions in $m+1$ variables, and define $\mathbb{A}^{\!m+1}\Def\text{Spec}_{\ \!}\FF_{q}[\bold{A}]=\text{Spec}_{\ \!}\FF_{\!q}[A_{0},...,A_{m}]$. On the trivial family of curves $(C\backslash E)\!\times_{\FF_{\!q}}\!\mathbb{A}^{\!m+1}=\text{Spec}_{\ \!}R[\bold{A}]=\text{Spec}_{\ \!}R\otimes_{\FF_{\!q}}\FF_{\!q}[\bold{A}]$, we have the regular function
	$$
	\mathcal{F}_{\!\mathbf{A}}
	\ \Def\ 
	f_{0}+A_0+\sum_{j=1}^{m}A_j\ \!g_j.
	$$
For each $\FF_{\!q}$-rational point $\bold{a}\in\mathbb{A}^{\!m+1}$, the restriction of $\mathcal{F}_{\!\bold{A}}$ to $(C\backslash E)\!\times_{\FF_{\!q}}\!\text{Spec}_{\ \!}\kappa(\bold{a})$ defines a regular function $\mathcal{F}_{\!\bold{a}}$ on $C\backslash E$.

\vskip .2cm

	Let $K\big/\FF_{\!q}(\mathbf{A})$ be an algebraic extension. For an ideal $\mathfrak{I}$ in the Dedekind domain $K\otimes_{\FF_{\!q}(\mathbf{A})}R(\mathbf{A})$, let
	\begin{equation}\label{U.P.F.}
	\mathfrak{I}\ =\ \mathfrak{P}^{e_{1}}_{1}\cdots\mathfrak{P}^{e_{\ell}}_{\ell}
	\end{equation}
denote the prime decomposition of $\mathfrak{I}$.

\begin{definition}\label{def: splitting field of an element}
\normalfont
	{\bf (i)} If $e_{1}=\cdots=e_{\ell}=1$ in \eqref{U.P.F.}, then we define the \textit{splitting field} of $\mathfrak{I}$ {\em over} $K$, denoted $\text{split}(\mathfrak{I})$ or $\text{split}(\mathfrak{I}/K)$, to be the composite
	$$
	\text{split}(\mathfrak{I})
	\ \Def\ 
	\text{split}(\mathfrak{P}_1)\cdots \text{split}(\mathfrak{P}_{\ell}),
	$$
where for each $1\le i\le n$, $\text{split}(\mathfrak{P}_i)$ denotes the normal closure of $\kappa(\mathfrak{P}_i)$ in $\overline{\FF_{\!q}}(\bold{A})$.
	
	{\bf (ii)}
	If each extension $\kappa(\mathfrak{P}_{i})/K$ is separable, then the composite extension $\text{split}(\mathfrak{I})\big/K$ is normal, and we define the \textit{Galois group of} $\mathfrak{I}$ to be
	$$
	\text{Gal}\big(\mathfrak{I}\big/K\big)
	\ \Def\ 
	\text{Gal}\big(\text{split}(\mathfrak{I})/K\big).
	$$
\end{definition}

\end{subsection}

\begin{subsection}{Linearly disjointness of splitting fields}\label{sec:Linearly disjoint}	
	For each $1\le i\le n$, the rational function $L_{i}(\mathcal{F}_{\!\bold{A}})$ on $C_{\FF_{\!q}(\bold{A})}$ determines a morphism
	$$
	L_{i}(\mathcal{F}_{\!\bold{A}}):C_{\FF_{\!q}(\bold{A})}\longrightarrow\PP^{1}_{\FF_{\!q}(\bold{A})}
	$$
and a field extension
	\begin{equation}\label{equation: field extension from L}
	\FF_{\!q}(\bold{A})\ \hookrightarrow\ \FF_{\!q}(C)(A_{0},\dots,A_{m}).
	\end{equation}
Likewise, if we define $\FF_{\!q}(\bold{A}')\Def\FF_{\!q}(A_1,...,A_m)$, then the rational function 	
	$$
	\Psi_i
	\ \Def\ 
	\sigma_{i}+f_0+\sum_{j=1}^{m}A_j\ \!g_j
	\ =\ 
	L_i(\mathcal{F}_{\!\bold{A}})-A_0.
	$$
on $C_{\FF_{\!q}(\bold{A}')}$ provides a morphism	
	$$
	\Psi_{i}:C_{\FF_{\!q}(\bold{A}')}\longrightarrow\PP^{1}_{\FF_{\!q}(\bold{A}')}
	$$
and field extension
	\begin{equation}\label{equation: field extension from Psi}
	\FF_{\!q}(\bold{A}')(t)\ \hookrightarrow\ \FF_{\!q}(C)(A_{1},\dots,A_{m}).
	\end{equation}

\begin{lemma}\label{lemma: separability for one linear polynomial}
\normalfont
	The field extension \eqref{equation: field extension from L} is separable.
\end{lemma}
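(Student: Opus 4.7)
The plan is to reinterpret the lemma geometrically: the extension \eqref{equation: field extension from L} is the function-field extension induced by the nonconstant morphism $\phi := L_i(\mathcal{F}_{\!\bold{A}}) : C_{\FF_{\!q}(\bold{A})} \to \PP^1_{\FF_{\!q}(\bold{A})}$ of smooth projective geometrically irreducible curves over $F := \FF_{\!q}(\bold{A})$, and separability of such a morphism is equivalent to the non-vanishing of $d\phi$ in the one-dimensional $\FF_{\!q}(C)(\bold{A})$-vector space $\Omega^1_{\FF_{\!q}(C)(\bold{A})/F}$. The whole argument reduces to showing $d\phi \neq 0$.

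To compute $d\phi$, I fix a separating transcendence basis element $u$ of $\FF_{\!q}(C)/\FF_{\!q}$; then $du$ generates $\Omega^1_{\FF_{\!q}(C)/\FF_{\!q}}$ and, by base change for K\"ahler differentials, also generates $\Omega^1_{\FF_{\!q}(C)(\bold{A})/F}$. Since $A_0, \dots, A_m \in F$ are constants for this differential,
\[
d\phi \ =\ d\sigma_i + df_0 + \sum_{j=1}^m A_j\, dg_j \ =\ c\, du , \qquad c \ =\ (\partial_u\sigma_i + \partial_u f_0) + \sum_{j=1}^m A_j\, \partial_u g_j .
\]
The coefficient $c$ is a polynomial in the algebraically independent variables $A_1,\dots,A_m$ with coefficients in $\FF_{\!q}(C)$, so the vanishing of $c$ would force every $\partial_u g_j$ to vanish. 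Because $u$ is separating and $\FF_{\!q}$ is perfect, $\ker\partial_u = \FF_{\!q}(C)^p$, so this would place each $g_j$ in $\FF_{\!q}(C)^p$.

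To derive a contradiction I exploit the very ampleness of $E$: the basis $\{1,g_1,\dots,g_m\}$ of $H^0(C,\mathscr{O}(E))$ defines a closed immersion $\phi_E : C \hookrightarrow \PP^m_{\FF_{\!q}}$, and the conormal exact sequence $0 \to \mathscr{I}/\mathscr{I}^2 \to \phi_E^{*}\Omega^1_{\PP^m/\FF_{\!q}} \to \Omega^1_{C/\FF_{\!q}} \to 0$ shows that $\phi_E^{*}$ is surjective on K\"ahler differentials. At the generic point this yields a surjection of $\FF_{\!q}(C)$-vector spaces onto the one-dimensional $\Omega^1_{\FF_{\!q}(C)/\FF_{\!q}}$; since $\phi_E^{*} dx_j = dg_j$ in the standard affine chart $\{x_0 \neq 0\}$, at least one $dg_j$ must be nonzero, i.e.\ some $g_j \notin \FF_{\!q}(C)^p$. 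This contradicts the conclusion of the previous paragraph, forcing $c \neq 0$ and hence $d\phi \neq 0$, so the extension is separable.

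I expect the main obstacle to be the algebraic-geometric bridge in the third paragraph: translating the formal calculation of $d\phi$ into a property of the linear system $H^0(C,\mathscr{O}(E))$, and then using the very ampleness hypothesis to rule out the pathological case in which every $g_j$ is a $p$-th power. The remaining steps are standard base-change manipulations for K\"ahler differentials together with the familiar identification $\ker\partial_u = \FF_{\!q}(C)^p$ for a separating variable $u$ over a perfect ground field.
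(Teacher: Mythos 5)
Your proof is correct, and it takes a genuinely different route from the paper's. The paper's own argument is indirect: it uses the valuation hypothesis \eqref{equation: key condition on sigma_i} to deduce, via \eqref{equation: inequality for sum}, that $I(\sigma_i+f_0,E)$ is a short interval in the sense of \cite{BF17}, and then invokes \cite[Lemmas 3.3.1 \& 3.3.3]{BF17}, which are tailored to precisely this generic-translate situation, to conclude both that $V(L_i(\mathcal{F}_{\mathbf{A}}))$ is a single point $\mathfrak{P}_i$ and that the residue extension $\FF_{\!q}(\mathbf{A})\hookrightarrow\kappa(\mathfrak{P}_i)$ is separable; the identification $A_0\mapsto -t$ then converts this into separability of the morphism $\Psi_i$, which is what the lemma is really asserting. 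Your argument proves the same separability by a direct computation of $d\phi$, trading the citation for the observation that $c=0$ would force every $g_j$ into $\FF_{\!q}(C)^p$, and then killing this possibility by the fact that a closed immersion is unramified and hence pullback on K\"ahler differentials is surjective at the generic point, so some $dg_j\neq 0$. This is more self-contained, and it has the notable feature of not using the valuation condition \eqref{equation: key condition on sigma_i} on $\sigma_i$ and $f_0$ at all: separability of $L_i(\mathcal{F}_{\mathbf{A}})$ falls out of the very ampleness of $E$ alone. The one thing your proof does not supply, which the paper's proof produces as a byproduct and which Remark \ref{rmk: disc, different and norm} then imports, is the statement that $V(L_i(\mathcal{F}_{\mathbf{A}}))$ is a \emph{single} prime $\mathfrak{P}_i$; but since that is not part of the lemma's statement, your proof is complete as a proof of the lemma.
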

\begin{proof}
	The assumption \eqref{equation: key condition on sigma_i} implies that for each $\mathfrak{p}\in\text{supp}(E)$, we have
	\begin{equation}\label{equation: inequality for sum}
	\nu_{\mathfrak{p}}(\sigma_i +f_0)
	\ =\ 
	\text{min}\{\ \!\nu_{\mathfrak{p}}(\sigma_{i}),\ \!\nu_{\mathfrak{p}}(f_{0})\ \!\}
	\ <\ 
	-\nu_{\mathfrak{p}}(E).
	\end{equation}
This makes $I(\sigma_i+f_0,E)$ a short interval in the sense of \cite[corrected Definition 1.4.1]{BF17}. Thus by \cite[Lemmas 3.3.1 \& 3.3.3]{BF17}, the variety $V\big(L_{i}(\mathcal{F}_{\!\bold{A}})\big)\subset (C\backslash E)_{\FF_{\!q}(\bold{A})}$ consists of a single point $\mathfrak{P}_{i}$ such that the field extension
	\begin{equation}\label{equation: field extension for prime}
	\FF_{\!q}(\bold{A})\ \hookrightarrow\ \kappa(\mathfrak{P}_{i})
	\end{equation}
is separable. Under the identification
	\begin{equation}\label{equation: A0 to -t}
	\begin{array}{rcl}
	\FF_{\!q}(\bold{A}) & \!\!\xrightarrow{\ \sim\ }\!\! &\FF_{\!q}(\bold{A}')(t)
	\\[4pt]
	A_{0} & \!\!\longmapsto\!\! & -t,
	\end{array}
	\end{equation}
the extensions \eqref{equation: field extension from Psi} and \eqref{equation: field extension for prime} become isomorphic. Thus \eqref{equation: field extension from Psi} is separable.
\end{proof}
   
\begin{remark}\label{rmk: disc, different and norm}
	Let $\mathfrak{P}_{i}$ be the underlying point of $V(\mathcal{F}_{\!\bold{A}})$ as in the proof of Lemma \ref{lemma: separability for one linear polynomial}. By Lemma \ref{lemma: separability for one linear polynomial}, we can consider the splitting field
	$$
	\text{split}\big({L_i(\mathcal{F}_{\!\bold{A}})}\big)
	\ \Def\ 
	\text{split}\big(\kappa(\mathfrak{P}_{i})\big/\FF_{\!q}(\bold{A})\big)
	$$
obtained as the normal closure of the extension \eqref{equation: field extension from L}, as defined in \cite[Definition 3.2.1]{BF17}. Let $\mathfrak{d}_i$ denote the discriminant of the extension $\text{split}\big({L_i(\mathcal{F}_{\!\bold{A}})}\big)\big/\FF_{\!q}(\bold{A})$, as defined in \cite[\S III.3, Definition (2.8)]{Neukirch} for instance. The discriminant is a fractional ideal in $\FF_{\!q}(\bold{A})$ that restricts to an actual ideal in $\FF_{\!q}(\bold{A}')[A_{0}]$. Because $\FF_{\!q}(\bold{A}')[A_{0}]$ is a principal ideal domain, there exists a function $D_{i}\in \FF_{\!q}(\bold{A}')[A_0]$, well defined up to multiplication by elements in $\FF_{\!q}(\bold{A}')^{\times}$, such that
	$$
	\mathfrak{d}_{i}\ =\ (D_{i})
	\ \ \ \mbox{in}\ \ \ 
	\FF_{\!q}(\bold{A}')[A_0].
	$$
	
	Via the identification \eqref{equation: A0 to -t}, \cite[\S III.3, Corollary (2.12)]{Neukirch} implies that the map $\Psi_{i}$ is ramified at a point $x$ in $C_{\FF_{\!q}(\bold{A}')}\backslash E_{\FF_{\!q}(\bold{A}')}$ if and only if $D_{i}$ vanishes at the point $\Psi_{i}(x)$ inside $\text{Spec}_{\ \!}\FF_{\!q}(\bold{A}')[A_0]\subset\PP^{1}_{\FF_{\!q}(\bold{A}')}$.

\begin{notation}
	Let $\Omega^{1}_{C}$ denote the sheaf of K\"ahler differentials on $C$. For each $1\le i\le n$, the functions $\Psi_{i}$ and $L_{i}(\mathcal{F}_{\!\bold{A}})$ are regular on $(C\backslash E)_{\FF_{\!q}(\bold{A}')}$ and $(C\backslash E)_{\FF_{\!q}(\bold{A})}$, respectively. Let $d\Psi_{i}$ and $dL_{i}(\mathcal{F}_{\!\bold{A}})$ denote their differentials, and note that
	$$
	d\Psi_{i}\ =\ dL_{i}(\mathcal{F}_{\!\bold{A}})
	\ \ \ \ \ \ \mbox{on}\ \ \ \ C_{\FF_{\!q}(\bold{A})}\backslash E_{\FF_{\!q}(\bold{A})}.
	$$
	
	Given a field extension $K/\FF_{\!q}$, a section $\omega\in\Gamma(C_{K}\backslash E_{K},\ \!\Omega^{1}_{C_{K}})$, and any point $x\in C_{K}\backslash E_{K}$, let $\omega|_{x}$ denote the restriction of $\omega$ to the fiber $(\Omega^{1}_{C_{K}})_{x}$.
\end{notation}
	
\end{remark}

\begin{lemma}\label{lem:disc prime iff sys of equations}
\normalfont
		In the notation of Remark \ref{rmk: disc, different and norm}, $D_1$ and $D_2$ are relatively prime in the polynomial ring $\FF_{\!q}(\bold{A}')[A_0]$ if and only if the system of equations
	 \begin{equation}\label{equation: separating critical system}
	\begin{array}{rcll}
	d\Psi_{1}|_{x}
	&
	\!\!\!\!=\!\!\!\!
	&
	0;
	\\[10pt]
	d\Psi_{2}|_{y}
	&
	\!\!\!\!=\!\!\!\!
	&
	0;
	\\[10pt]
	\Psi_{1}(x)
	&
	\!\!\!\!=\!\!\!\!
	&
	\Psi_{2}(y),
	\end{array}
	\end{equation}
has no solution in pairs of (not necessarily distinct) points $x,y\in(C\backslash E)_{\FF_{\!q}(\bold{A}')}$.
\end{lemma}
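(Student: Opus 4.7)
The plan is to translate coprimality of $D_1$ and $D_2$ into a statement about the ramification loci of $\Psi_1$ and $\Psi_2$, proceeding in three steps. First, since $\FF_{\!q}(\bold{A}')[A_{0}]$ is a principal ideal domain, $D_1$ and $D_2$ are relatively prime if and only if they share no common root in the algebraic closure $\overline{\FF_{\!q}(\bold{A}')}$; equivalently, their vanishing subschemes $V(D_1)$ and $V(D_2)$ inside $\text{Spec}_{\ \!}\FF_{\!q}(\bold{A}')[A_{0}]$ are disjoint.

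Second, the last paragraph of Remark~\ref{rmk: disc, different and norm} identifies $V(D_i)$ with the $\Psi_i$-image of the ramification locus of $\Psi_i$ on $(C\backslash E)_{\FF_{\!q}(\bold{A}')}$. Combining this with the first step, $D_1$ and $D_2$ fail to be coprime if and only if there exist points $x, y\in (C\backslash E)_{\FF_{\!q}(\bold{A}')}$ such that $\Psi_1$ is ramified at $x$, $\Psi_2$ is ramified at $y$, and $\Psi_1(x)=\Psi_2(y)$.

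Third, because $(C\backslash E)_{\FF_{\!q}(\bold{A}')}$ and $\PP^{1}_{\FF_{\!q}(\bold{A}')}$ are smooth curves of the same dimension, ramification of $\Psi_i$ at a point $x$ is equivalent to the vanishing $d\Psi_i|_{x}=0$ of its differential, viewed as a section of $\Omega^{1}_{C_{\FF_{\!q}(\bold{A}')}}$ on the relevant open. Substituting this condition into the criterion obtained in the second step, and then contrapositing, recovers exactly the statement of the lemma. I expect no substantive obstacle: Remark~\ref{rmk: disc, different and norm} has already accomplished the essential algebraic-to-geometric translation, so the only care needed is routine bookkeeping to match points of $\text{Spec}_{\ \!}\FF_{\!q}(\bold{A}')[A_{0}]$ whose residue fields lie in $\overline{\FF_{\!q}(\bold{A}')}$ to schematic points $x, y$ of $(C\backslash E)_{\FF_{\!q}(\bold{A}')}$, which is immediate after base change to an appropriate finite extension of $\FF_{\!q}(\bold{A}')$.
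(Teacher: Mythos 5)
Your argument is correct and follows essentially the same route as the paper's own proof: translate coprimality of the discriminant generators $D_1$, $D_2$ into disjointness of the branch loci of $\Psi_1$, $\Psi_2$ in $\text{Spec}_{\ \!}\FF_{\!q}(\bold{A}')[A_0]$ via Remark~\ref{rmk: disc, different and norm}, and then characterize ramification at a point $x$ by the vanishing $d\Psi_i|_{x}=0$. The one step the paper makes explicit that you leave implicit is the observation $\Psi_{i}^{-1}(\infty)=\text{supp}(E_{\FF_{\!q}(\bold{A}')})$, which guarantees that ramification on all of $(C\backslash E)_{\FF_{\!q}(\bold{A}')}$ is detected by the discriminant restricted to the affine chart $\text{Spec}_{\ \!}\FF_{\!q}(\bold{A}')[A_0]$ and not lost over $\infty\in\PP^{1}_{\FF_{\!q}(\bold{A}')}$; this is already built into the statement of Remark~\ref{rmk: disc, different and norm}, which you cite, so your proof is sound.
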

\begin{proof}
	Because $\Psi_{i}^{-1}(\infty)=\text{supp}(E_{\FF_{\!q}(\bold{A}')})$, Remark \ref{rmk: disc, different and norm} implies that $D_{1}$ and $D_{2}$ are relatively prime if and only if the branch divisors of $\Psi_{1}$ and $\Psi_{2}$ are disjoint in $\text{Spec}_{\ \!}\FF_{\!q}(\bold{A}')[A_0]$. Recall that the support of the branch divisor of $\Psi_{i}$ is the set of all $\mathfrak{p}\in\PP^{1}_{\FF_{\!q}(\bold{A}')}$ such that
	$$
	\mathfrak{p}\ =\ \Psi_{i}(x)
	\ \ \ \ \ \ \mbox{and}\ \ \ \ \ \ 
	d\Psi_{i}|_{x}\ =\ 0
	$$
for some $x\in C_{\FF_{\!q}(\bold{A}')}$. Thus a point $\mathfrak{p}$ of $\text{Spec}_{\ \!}\FF_{\!q}(\bold{A}')[A_0]$ in the support of both branch divisors $\Psi_{1}$ and $\Psi_{2}$ is any point satisfying $\Psi_{1}(x)=\mathfrak{p}=\Psi_{2}(y)$ and $d\Psi_{1}|_{x}=0=d\Psi_{2}|_{y}$ for some (not necessarily distinct) pair of points $x,y\in C_{\FF_{\!q}(\bold{A}')}\backslash E_{\FF_{\!q}(\bold{A}')}$.
\end{proof}

\begin{proposition}\label{proposition: no solutions exist}
\normalfont
	If $\text{char}_{\ \!}\FF_{\!q}\ne 2$, then the system of equations \eqref{equation: separating critical system} has no solution in points $x,y\in C_{\FF_{\!q}(\bold{A}')}\backslash E_{\FF_{\!q}(\bold{A}')}$. 
\end{proposition}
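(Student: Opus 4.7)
My plan is to set up an incidence variety over the parameter space $\bold A'\in\mathbb{A}^{m}$ and to prove the proposition by a dimension count showing that its projection to $\mathbb{A}^{m}$ is not dominant, so that the generic fiber---the set of solutions $(x,y)$ over $\FF_{\!q}(\bold A')$---is empty. Concretely, let $V\Def\langle g_{1},\dots,g_{m}\rangle_{\FF_{\!q}}\subset H^{0}(C,\mathscr{O}(E))$ be the complement of the constants and set
$$
\mathcal{X}\ \Def\ \big\{(x,y,\bold A')\in (C\backslash E)^{2}\times\mathbb{A}^{m} : \text{\eqref{equation: separating critical system} holds}\big\}.
$$
Because $\Psi_{i}=\sigma_{i}+f_{0}+h$ with $h\Def\sum_{j}A_{j}g_{j}\in V$, each equation of \eqref{equation: separating critical system} is affine-linear in $h$, so the fiber of $\mathcal{X}\to(C\backslash E)^{2}$ over $(x,y)$ is either empty or an affine subspace of $V$ of codimension equal to the rank $r(x,y)$ of the $\FF_{\!q}$-linear map $\phi_{x,y}:V\to \FF_{\!q}^{3}$, $h\mapsto\big(dh|_{x},\ \!dh|_{y},\ \!h(x)-h(y)\big)$. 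It then suffices to prove $\dim\mathcal{X}<m$.

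The core step---and what I expect to be the main obstacle---is the off-diagonal rank computation, for which the positivity $E\geq 3E_{0}$ enters crucially. For any $x,y\in C\backslash E$ with $x\neq y$, the hypothesis $\deg E_{0}\geq 2g+1$ gives $\deg E\geq 6g+3$, so $\deg(K_{C}-E+2x+2y)\leq 2g+2-\deg E<0$. Serre duality then forces $H^{1}\big(C,\mathscr{O}(E-2x-2y)\big)=0$, hence the evaluation map
$$
H^{0}\big(C,\mathscr{O}(E)\big)\ \twoheadrightarrow\ \mathscr{O}(E)_{x}/\mathfrak{m}_{x}^{2}\oplus\mathscr{O}(E)_{y}/\mathfrak{m}_{y}^{2}
$$
is surjective onto a $4$-dimensional target. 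The constants $\FF_{\!q}\subset H^{0}$ map to the $1$-dimensional ``diagonal'' subspace $\{(c,0,c,0)\}$, and quotienting source and target by this subspace realizes $\phi_{x,y}$ as the induced surjection of $V$ onto the $3$-dimensional quotient $(dh|_{x},dh|_{y},h(x)-h(y))$. Consequently $r(x,y)=3$ throughout the $2$-dimensional open stratum $x\neq y$, which contributes at most $2+(m-3)=m-1$ to $\dim\mathcal{X}$.

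On the diagonal $x=y$, the first two equations of \eqref{equation: separating critical system} both become constraints on $dh|_{x}$ and are simultaneously satisfiable only if $d(\sigma_{1}-\sigma_{2})|_{x}=0$; under that constraint on $x$ they collapse to the single condition $dh|_{x}=-d(\sigma_{1}+f_{0})|_{x}$ on $h$. The third equation becomes $(\sigma_{1}-\sigma_{2})(x)=0$, again a constraint on $x$ alone. Since $\sigma_{1}\neq\sigma_{2}$, the rational function $\sigma_{1}-\sigma_{2}$ is nonzero with finite zero locus in $C\backslash E$, so the admissible set of $x$'s on the diagonal is finite; over each such $x$ the one remaining condition cuts out an affine hyperplane in $V$, contributing at most $0+(m-1)=m-1$ to $\dim\mathcal{X}$. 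Combining the two strata yields $\dim\mathcal{X}\leq m-1<m$, so the generic fiber of $\mathcal{X}\to\mathbb{A}^{m}$ is empty and \eqref{equation: separating critical system} has no solution in $(C\backslash E)_{\FF_{\!q}(\bold A')}^{2}$.

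The hypothesis $\text{char}_{\ \!}\FF_{\!q}\neq 2$ does not enter this outline in an obvious way. The most plausible places for it to be needed are: on the diagonal, to sidestep the possibility that $\sigma_{1}-\sigma_{2}$ is an inseparable $p$-th power (though the dimension count above remains indifferent to this, since a nonzero function still has finitely many zeros); or more likely as an inherited hypothesis from the separability results \cite[Lemmas 3.3.1 \& 3.3.3]{BF17} underlying Lemma \ref{lemma: separability for one linear polynomial}, which guarantee $\Psi_{i}$ is separable so that the discriminant $D_{i}$ of Remark \ref{rmk: disc, different and norm} is well-defined. I would verify the precise role during the writeup.
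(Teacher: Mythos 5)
Your proposal is correct, and it takes a genuinely different route from the paper. The paper proceeds by explicit coordinate computation: it chooses a linear form $t$ on $\PP^{m_0}$ separating $x$ and $y$, rewrites each $L_i(\mathcal{F}_{\!\mathbf{A}})$ in the basis $\{1,t,t^2,\dots,t^{\ell},g_{\ell+1},\dots\}$, reduces the system \eqref{equation: new separating critical system} to the vanishing of a single $3\times 3$ determinant $\det(u,v)$, factors $\det(u,v)=\big(t(v)-t(u)\big)\big(2\,c(u,v)+(t(u)-t(v))(\varphi_1(u)+\varphi_2(v))\big)$, and then shows the second factor is not identically zero by isolating the coefficient of the free parameter $B_3$. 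The factor of $2$ is precisely where $\text{char}_{\ \!}\FF_{\!q}\ne 2$ enters. The diagonal case $x=y$ is handled separately: $(\sigma_1-\sigma_2)(x)=0$ forces $x\in C_{\overline{\FF_{\!q}}}$, after which the linear condition $dL_1(\mathcal{F}_{\!\mathbf{A}})|_x=0$ expresses $-B_1$ in terms of quantities independent of $B_1$, a contradiction. Your approach replaces all of this with a single incidence-variety dimension count, with the only nontrivial input being surjectivity of the two-point $2$-jet evaluation map $H^0\big(C,\mathscr{O}(E)\big)\twoheadrightarrow \mathscr{O}(E)_x/\mathfrak{m}_x^2\oplus\mathscr{O}(E)_y/\mathfrak{m}_y^2$, which is a standard consequence of $h^1\big(C,\mathscr{O}(E-2x-2y)\big)=0$ and the positivity $\deg E\ge 3(2g+1)>2g+2$. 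This is cleaner: it avoids the choice of covering opens $U_{\xi\eta}$, the explicit determinant, and the case split on $x=y$ versus $x\ne y$ is genuinely lighter (your diagonal stratum is handled by the same fiber-counting template). What your approach buys beyond cleanliness is that it manifestly works in every characteristic, including $2$; as you correctly suspect, the hypothesis $\text{char}_{\ \!}\FF_{\!q}\ne 2$ in the proposition is an artifact of the paper's determinant factorization, and is genuinely needed only downstream in Corollary \ref{cor: linearly disjoint}, where the quadratic subextension $\FF_{\!q}(\mathbf{A})(\sqrt{D_i})=\text{split}\big(L_i(\mathcal{F}_{\!\mathbf{A}})\big)^{A_{k_i}}$ must be replaced by an Artin--Schreier extension in characteristic $2$. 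Two small points to tighten in a writeup: the third equation $\Psi_1(x)=\Psi_2(y)$ is an equality in $\A^1$, so the incidence scheme should be defined as a fiber product over $\A^1\times\A^m$ (or equivalently as $V\big(\Psi_1\circ\mathrm{pr}_1-\Psi_2\circ\mathrm{pr}_2\big)$ intersected with the two critical loci) to make the constructible-image argument kosher; and the off-diagonal bound $2+(m-3)$ needs $m\ge 3$, which holds since by Riemann--Roch $m+1=\deg E+1-g\ge 5g+4$.
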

\begin{proof}
	Since $\Psi_{i}=L_{i}(\mathcal{F}_{\!\bold{A}})-A_{0}$, it suffices to prove that the system of equations
	 \begin{equation}\label{equation: new separating critical system}
	\begin{array}{rcll}
	dL_{1}(\mathcal{F}_{\!\bold{A}})|_{x}
	&
	\!\!\!\!=\!\!\!\!
	&
	0
	\\[10pt]
	dL_{2}(\mathcal{F}_{\!\bold{A}})|_{y}
	&
	\!\!\!\!=\!\!\!\!
	&
	0
	\\[10pt]
	L_{1}(\mathcal{F}_{\!\bold{A}})(x)
	&
	\!\!\!\!=\!\!\!\!
	&
	L_{2}(\mathcal{F}_{\!\bold{A}})(y)
	\end{array}
	\end{equation}
has no solutions over $\overline{\FF_{\!q}(\bold{A})}$.
	
	As in the proof of \cite[Proposition 4.3.3]{BF17}, choose an effective very ample divisor $E_{0}$ satisfying \eqref{equation: thrice very ample}, define $m_{0}\overset{{}_{\text{def}}}{=}\text{dim}_{\ \!}H^{0}\big(C,\mathscr{O}(E_{0})\big)-1$, and let $C\subset\PP^{m_{0}}$ be the closed embedding determined by $E_{0}$.

	Assume first that $x\neq y$. For each pair of distinct points $\xi,\eta\in C_{\overline{\FF_{\!q}}}\backslash E_{\overline{\FF_{\!q}}}$, let $t$ be a linear form on $\PP^{m_{0}}_{\overline{\FF_{\!q}}}$ such that $t(\xi)\ne t(\eta)$. The assumption on $E_{0}$ gives us a new $\overline{\FF_{\!q}}$-linear basis
	\begin{equation}\label{eq: t basis}
	\big\{1,t,t^{2},\dots,t^{\ell},g_{\ell+1},\dots,g_{m}\big\}
	\ \ \ \mbox{of}\ \ \ 
	H^{0}\big(C_{\overline{\FF_{\!q}}},\mathscr{O}(E_{\overline{\FF_{\!q}}})\big),
	\end{equation}
and a new coordinate system $A_{0},B_{1},\dots,B_{m}$ on $\mathbb{A}^{\!m+1}_{\overline{\FF_{\!q}}}$ such that
	\begin{equation}\label{eq: Li in t}
	L_{i}(\mathcal{F}_{\!\bold{A}})
	\ =\ 
	\sigma_{i}+\underset{\mbox{$=\mathcal{F}_{\!\bold{A}}$}}{\underbrace{f_{0}+A_{0}+B_{1}t+\cdots+B_{\ell}t^{\ell}+B_{\ell+1}g_{\ell+1}+\cdots+B_{m}g_{m}}}
	\end{equation}
for each $i=1,2$. For $i=1,2$, define $\Phi_{i}\Def L_{i}(\mathcal{F}_{\!\bold{A}})-B_{1}t-B_{2}t^{2}$.
	As in the proof of \cite[Proposition 4.3.3]{BF17}, we can choose a Zariski opens $U_{\xi\eta}$ that provide a covering of
	$$
	\Big((C_{\overline{\FF_{\!q}}}\backslash E_{\overline{\FF_{\!q}}})\underset{\overline{\FF_{\!q}}}{\times}(C_{\overline{\FF_{\!q}}}\backslash E_{\overline{\FF_{\!q}}})\Big)\big\backslash\{\text{diagonal}\},
	$$
such that an $\overline{\FF_{\!q}(\bold{A})}$-valued solution $(u,v)\in U_{\xi\eta}$ to \eqref{equation: new separating critical system} is the same thing as a solution to the single equation
	$$
	\text{det}(u,v)
	\ \ \Def\ \ 
	\text{det}
	\left(
	\begin{array}{ccc}
	1 & 2t(u) & \varphi_{1}(u) \\[4pt]
	1 & 2t(v) & \varphi_{2}(v) \\[4pt]
	t(v)-t(u) & t(v)^{2}-t(u)^{2} & c(u,v)
	\end{array}
	\right)
	\ \ =\ \ 
	0,
	$$
where $\varphi_{i}\Def-\frac{d\Phi_{i}}{dt}$ for $i=1,2$, and where $c(u,v)\Def\Phi_{1}(u)-\Phi_{2}(v)$. A direct calculation gives
	$$
	\text{det}(u,v)
	\ =\ 
	\big(t(v)-t(u)\big)\ \!\Big(2\ \!c(u,v)+\big(t(u)-t(v)\big)\big(\varphi_{1}(u)+\varphi_{2}(v)\big)\Big).
	$$
By the same reasoning as in \cite[Proposition 4.3.3]{BF17}, it suffices to prove that $\text{det}(u,v)$ cannot be $0$ identically on $U_{\xi\eta}$.
	
	If $n \ge 3$, then the coefficient of $B_{3}$ in 
	$$
		2\ c(u,v)+\big(t(u)-t(v)\big)\big(\varphi_{1}(u)+\varphi_{2}(v)\big)
	$$
 is $2\big(t(u)^{3}-t(v)^{3}\big)+\big(t(u)-t(v)\big)\ \!3\big(t(u)^{2}+t(v)^{2}\big)$, which is not identically $0$.
Since $t(u)\ne t(v)$, this implies that $\text{det}(u,v)$ is not identically zero on $U_{\xi\eta}$.

	Assume next that $x=y$. If the pair $x=y\in C_{\overline{\FF_{\!q}(\bold{A})}}\backslash E_{\overline{\FF_{\!q}(\bold{A})}}$ is a solution to \eqref{equation: new separating critical system}, then $L_{1}(\mathcal{F}_{\!\bold{A}})(x)=
L_{2}(\mathcal{F}_{\!\bold{A}})(x)$, implying that $(\sigma_1-\sigma_2)(x)=0$. Because $\sigma_1$ and $\sigma_2$ are distinct regular functions on $C_{\FF_q}\backslash E_{\FF_q}$, this implies that $x\in C_{\overline{\FF_q}}\backslash E_{\overline{\FF_q}}$. 
This allows us to choose a linear form $t$ on $\PP^{m_{0}}_{\overline{\FF_{\!q}}}$ such that $t(x)\neq 0$. Using this linear form $t$, choose an $\overline{\FF_{\!q}}$-linear basis \eqref{eq: t basis}. This choice lets us write $L_{i}(\mathcal{F}_{\!\bold{A}})$ as in \eqref{eq: Li in t}. The solution $x$ satisfies $dL_{1}(\mathcal{F}_{\!\bold{A}})|_{x}=0$. But our choice of $t$ lets us write
	$$
	0
	\ =\ 
	dL_{1}(\mathcal{F}_{\!\bold{A}})|_{x}
	\ =\ 
	\frac{d\sigma_{1}}{dt}(x)+\frac{df_{0}}{dt}(x)+B_1+\sum_{j=2}^{\ell}j B_{j}\ t^{j -1}(x) + \sum_{j=\ell+1}^{m}\!\!B_j\frac{dg_{j}}{dt}(x)
	$$
and so
	$$
	-B_1
	\ =\ 
	\frac{d\sigma_{1}}{dt}(x)+\frac{df_{0}}{dt}(x)+\sum_{j=2}^{\ell}j B_{j}\ t^{j -1}(x) + \sum_{j=\ell+1}^{m}\!\!B_j\frac{dg_{j}}{dt}(x).
	$$
However, the right hand side of this last equation does not involve $B_1$, contradicting the fact that $x\in C_{\overline{\FF_q}}\backslash E_{\overline{\FF_q}}$.
\end{proof}
	
\begin{corollary}\label{cor: linearly disjoint} 
\normalfont
	If $\text{char}_{\ \!}\FF_{\!q}\ne 2$, then the splitting fields $\text{split}(L_i(\mathcal{F}_{\mathbf{A}}))$, for $1\le i\le n$, are linearly disjoint over $\FF_{\!q}(\bold{A})$.
\end{corollary}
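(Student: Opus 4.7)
Write $K_i := \text{split}(L_i(\mathcal{F}_{\bold{A}}))$ and $F := \FF_{\!q}(\bold{A})$. I will argue by induction on $n$, the case $n=1$ being vacuous. Assuming inductively that $K_1,\dots,K_{n-1}$ are linearly disjoint over $F$ and setting $M_n := K_1 K_2 \cdots K_{n-1}$, linear disjointness of all $n$ fields is equivalent to the condition $K_n \cap M_n = F$, since both $K_n/F$ and $M_n/F$ are Galois. Write $L := K_n \cap M_n$, which is itself Galois over $F$.

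The key input is Proposition~\ref{proposition: no solutions exist}. Its proof is symmetric in the two indices involved, so it applies to every pair $(n,j)$ with $1 \le j < n$ and yields that $D_n$ and $D_j$ are pairwise relatively prime in $\FF_{\!q}(\bold{A}')[A_0]$. Combined with Lemma~\ref{lem:disc prime iff sys of equations} and Remark~\ref{rmk: disc, different and norm}, this means that the branch loci of the morphisms $\Psi_n$ and $\Psi_j$ in $\text{Spec}_{\ \!}\FF_{\!q}(\bold{A}')[A_0]$ are pairwise disjoint.

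Next I apply a standard ramification argument. Via the identification \eqref{equation: A0 to -t}, $F$ is the function field of $\PP^{1}_{k'}$ with $k' := \FF_{\!q}(\bold{A}')$, and each $K_i/F$ is the normal closure of the cover $\Psi_i\colon C_{k'} \to \PP^{1}_{k'}$. Being contained in $K_n$, the extension $L/F$ can be ramified inside the affine chart $\text{Spec}_{\ \!}\FF_{\!q}(\bold{A}')[A_0]$ only along the branch locus of $\Psi_n$; being contained in $M_n$, it can be ramified there only along $\bigcup_{j<n} \text{branch}(\Psi_j)$. By the pairwise disjointness just established, $L/F$ must therefore be unramified at every closed point of $\text{Spec}_{\ \!}\FF_{\!q}(\bold{A}')[A_0]$.

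\emph{Main obstacle.} The remaining difficulty is to conclude $L = F$ even allowing possible ramification at the infinite place $t = \infty$ of $\PP^1_{k'}$. Since Artin--Schreier extensions show that nontrivial Galois extensions of $k'(t)$ ramified only at $\infty$ do exist in characteristic $p$, a direct Minkowski-style conclusion is unavailable, and some further structural input is required. I plan to exploit the explicit pole data of each $\Psi_i$ above $\infty$: by Remark~\ref{rmk: disc, different and norm} we have $\Psi_i^{-1}(\infty) = \text{supp}(E_{k'})$, and by \eqref{equation: inequality for sum} the pole order of $\Psi_i$ at each $\mathfrak{p} \in \text{supp}(E)$ equals $-\min\big(\nu_{\mathfrak{p}}(\sigma_i),\nu_{\mathfrak{p}}(f_0)\big)$, which depends rigidly on $\sigma_i$. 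A compatibility argument comparing the inertia subgroups of $\text{Gal}(K_n/F)$ above $\infty$ with those coming from the composite $M_n/F$, together with the fact that a geometrically connected everywhere-unramified cover of $\PP^{1}_{k'}$ is trivial, will then force $L/F$ to be unramified at $\infty$ as well, hence everywhere unramified, hence $L = F$, completing the inductive step.
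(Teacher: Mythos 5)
Your proposal takes a genuinely different route from the paper (geometric ramification theory versus the paper's group-theoretic reduction to quadratic resolvents), but as you yourself concede in your \emph{Main obstacle} paragraph, it is incomplete: the step ruling out ramification at $\infty$ is only sketched as a plan, not proved. That step is a real obstruction, not a formality. All of the covers $\Psi_i$ are simultaneously totally ramified over $\infty$ (indeed $\Psi_i^{-1}(\infty) = \text{supp}(E_{k'})$ for every $i$), so the branch loci are \emph{not} pairwise disjoint in $\PP^1_{k'}$, and the intersection $L = K_n \cap M_n$ can a priori pick up ramification there. In characteristic $p$, wild ramification at a single place is enough to support nontrivial Galois extensions, so no Minkowski-type vanishing is available. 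Beyond the $\infty$ issue, there is a second gap you have not flagged: even an everywhere-unramified extension $L/k'(t)$ need not be trivial, since $k' = \FF_{\!q}(\bold{A}')$ is not algebraically closed and $L$ could be a constant-field extension $k''(t)$ for some finite $k''/k'$. Ruling this out requires knowing that the $K_i$ are geometrically connected over $k'$, which your argument does not establish.

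The paper avoids both problems by working purely algebraically with the Galois group structure. Since \cite[Prop.~4.3.3 and Thm.~4.1.1]{BF17} give $\Gal(L_i(\mathcal{F}_{\bold{A}}),\overline{\FF_{\!q}}(\bold{A})) \cong S_{k_i}$ (and hence also over $\FF_{\!q}(\bold{A})$), the only nontrivial abelian quotient of each $\Gal(K_i/F)$ is $S_{k_i}/A_{k_i} \cong \ZZ/2$, whose fixed field is the quadratic resolvent $\FF_{\!q}(\bold{A})(\sqrt{D_i})$. By \cite[Lemma~3.3]{BarySoroker2012} it then suffices to show these quadratic fields are pairwise distinct, which follows immediately from $D_i$ square-free (\cite[Prop.~4.3.3]{BF17}) and $D_i, D_j$ coprime (your Lemma~\ref{lem:disc prime iff sys of equations} plus Proposition~\ref{proposition: no solutions exist}). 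This bypasses any analysis at $\infty$ and dispatches constant extensions for free (full symmetric Galois group already over $\overline{\FF_{\!q}}(\bold{A})$). If you want to pursue your ramification-theoretic approach, you would need to (a) prove the inertia comparison at $\infty$ that you only sketch, and (b) separately establish geometric connectedness of each $K_i/k'$; in practice both of these again lean on the full-$S_{k_i}$ input, at which point the paper's quadratic-resolvent shortcut is strictly simpler.
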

\begin{proof}
	As noted in the proof of Lemma \ref{lemma: separability for one linear polynomial}, for each $\mathfrak{p} \in \text{supp}(E)$ and for each $1\le i\le n$, the conditions \eqref{equation: key condition on sigma_i} imply that the inequality \eqref{equation: inequality for sum} holds. Thus the hypotheses of \cite[Proposition 4.3.3 and Theorem~4.1.1]{BF17} are satisfied and we have $\Gal\big(L_i(\mathcal{F}_{\mathbf{A}}),\overline{\FF_{\!q}}(\bold{A})\big)\iso S_{k_i}$ with
	$$
	k_{i}
	\ \Def\ 
	\deg_{\ \!}\text{div}(f_0+\sigma_{i})|_{C\backslash E}
	\ =\ 
	\deg_{\ \!}\text{div}\big(L_{i}(f_0)\big)\big|_{C\backslash E}.
	$$
Because we have inclusions $\Gal\big(L_i(\mathcal{F}_{\mathbf{A}}),\overline{\FF_{\!q}}(\bold{A})\big)\subseteq\Gal\big(L_i(\mathcal{F}_{\mathbf{A}}),\FF_{\!q}(\bold{A})\big)\hookrightarrow S_{k_{i}}$, this implies
	$$
	\Gal\big(L_i(\mathcal{F}_{\mathbf{A}}),\FF_{\!q}(\bold{A})\big)
	\ \iso\ 
	S_{k_i}.
	$$ 

	Let $A_{k_{i}}\!\subset\!S_{k_{i}}$ denote the alternating group on $k_{i}$ letters. Its fixed field is the quadratic extension
	$$
	\text{split}\big(L_i(\mathcal{F}_{\mathbf{A}})\big){}^{\!A_{k_{i}}}
	\ \cong\ 
	\FF_{\!q}(\bold{A})\big(\sqrt{D_{i}}\big),
	$$
where $D_{i}\in\FF_{\!q}(\bold{A}')[A_{0}]$ is an element generating the discriminant ideal $\mathfrak{d}_{i}$ as in Remark \ref{rmk: disc, different and norm}.
By \cite[Lemma 3.3]{BarySoroker2012}, it suffices to prove that the fields $\FF_{\!q}(\bold{A})\big(\sqrt{D_{i}}\big)$, for $1\le i\le n$, are linearly disjoint.

	Without loss of generality, consider $i=1,2$. By Lemma \ref{lem:disc prime iff sys of equations} and Proposition \ref{proposition: no solutions exist}, the elements $D_{1},D_{2}\in \FF_{\!q}(\bold{A}')[A_0]$ have no common prime factors. Likewise, \cite[Proposition 4.3.3]{BF17} implies that both $D_{1}$ and $D_{2}$ are square free. Because the fields $\FF_{\!q}(\bold{A})\big(\sqrt{D_{1}}\big)$ and $\FF_{\!q}(\bold{A})\big(\sqrt{D_{2}}\big)$ are degree-$2$ extensions of $\FF_{\!q}(\bold{A})$, their intersection $\FF_{\!q}(\bold{A})\big(\sqrt{D_{1}}\big)\cap\FF_{\!q}(\bold{A})\big(\sqrt{D_{2}}\big)$ is either $\FF_{\!q}(\bold{A})$ itself, or else the two field extensions coincide: $\FF_{\!q}(\bold{A})\big(\sqrt{D_{1}}\big)=\FF_{\!q}(\bold{A})\big(\sqrt{D_{2}}\big)$. The latter is the case if and only if the product $D_{1}D_{2}\in \FF_{\!q}(\bold{A}')[A_{0}]$ contains the square of a prime factor, contradicting the fact that $D_{1}$ and $D_{2}$ are square free and relatively prime in $\FF_{\!q}(\bold{A}')[A_{0}]$.
\end{proof}

Combining the results above, we have the following:
\begin{corollary}\label{prop:GalGroupOfMult} \normalfont
	For each algebraic extension $K/\FF_{\!q}$, we have natural group isomorphisms
\begin{center}
\hfill
	$
	\text{Gal}\Big(\ \!\overset{n}{\underset{i=1}{\mbox{{\larger\larger $\prod$}}}}L_{i}(\mathcal{F}_{\!\mathbf{A}}),\ \!K(\bold{A})\ \!\Big)
	\ \ \cong\ \ 
	\overset{n}{\underset{i=1}{\mbox{{\larger\larger $\prod$}}}}\ \!\text{Gal}\big(\ \!L_{i}(\mathcal{F}_{\!\mathbf{A}}),\ \!K(\bold{A})\ \!\big)
	\ \ \cong\ \ 
	S_{k_1}\!\times\cdots\times S_{k_n}.$
\hfill
$\square$
\end{center}
\end{corollary}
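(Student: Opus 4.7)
The plan is to combine Corollary \ref{cor: linearly disjoint} with the Galois group computation $\text{Gal}(L_i(\mathcal{F}_{\mathbf{A}}), \FF_{\!q}(\bold{A}))\cong S_{k_i}$ that was established inside its proof, and then promote everything from the base field $\FF_{\!q}(\bold{A})$ to $K(\bold{A})$ using a squeeze argument.

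First I would verify that for any algebraic extension $K/\FF_{\!q}$, we still have $\text{Gal}(L_i(\mathcal{F}_{\mathbf{A}}),K(\bold{A}))\cong S_{k_i}$. The proof of Corollary \ref{cor: linearly disjoint} quotes \cite[Proposition 4.3.3 and Theorem 4.1.1]{BF17} to obtain $\text{Gal}(L_i(\mathcal{F}_{\mathbf{A}}),\overline{\FF_{\!q}}(\bold{A}))\cong S_{k_i}$, and the natural inclusions $\text{Gal}(L_i(\mathcal{F}_{\mathbf{A}}),\overline{\FF_{\!q}}(\bold{A}))\subseteq\text{Gal}(L_i(\mathcal{F}_{\mathbf{A}}),K(\bold{A}))\subseteq\text{Gal}(L_i(\mathcal{F}_{\mathbf{A}}),\FF_{\!q}(\bold{A}))\hookrightarrow S_{k_i}$ then force the middle group to be $S_{k_i}$ as well.

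Next I would establish linear disjointness of $\text{split}(L_i(\mathcal{F}_{\mathbf{A}}))$ over $K(\bold{A})$. The proof of Corollary \ref{cor: linearly disjoint} carries over verbatim: Lemma \ref{lem:disc prime iff sys of equations}, Proposition \ref{proposition: no solutions exist}, and \cite[Proposition 4.3.3]{BF17} show that the discriminants $D_i\in\FF_{\!q}(\bold{A}')[A_0]$ are pairwise coprime and squarefree (these are purely algebraic facts about factorization in $\FF_{\!q}(\bold{A}')[A_0]$ that are unaffected by further algebraic extension of the constant field), so the quadratic extensions $K(\bold{A})(\sqrt{D_i})$ remain linearly disjoint. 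Combined with the computation $\text{Gal}(L_i(\mathcal{F}_{\mathbf{A}}),K(\bold{A}))\cong S_{k_i}$ from the previous step, \cite[Lemma 3.3]{BarySoroker2012} then upgrades linear disjointness of the alternating quadratic subextensions to linear disjointness of the full splitting fields over $K(\bold{A})$.

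The final step is the standard fact from Galois theory: if $M_1,\dots,M_n$ are finite Galois extensions of a field $F$ that are pairwise (indeed fully) linearly disjoint, then $\text{Gal}(M_1\cdots M_n/F)\cong\prod_{i}\text{Gal}(M_i/F)$ via the restriction homomorphism. Applying this with $F=K(\bold{A})$ and $M_i=\text{split}(L_i(\mathcal{F}_{\mathbf{A}}))\cdot K(\bold{A})$ gives the first isomorphism, and substituting $\text{Gal}(L_i(\mathcal{F}_{\mathbf{A}}),K(\bold{A}))\cong S_{k_i}$ into each factor gives the second. I expect no serious obstacle here, since the only subtle point — that linear disjointness is preserved under the constant-field base change $\FF_{\!q}(\bold{A})\hookrightarrow K(\bold{A})$ — is handled by rerunning the discriminant argument of Corollary \ref{cor: linearly disjoint} directly over $K(\bold{A})$ rather than attempting naive scalar extension of linear disjointness, which fails in general.
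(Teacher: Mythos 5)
Your proof is correct and spells out the argument that the paper leaves implicit (the corollary ends with an immediate $\square$ after ``Combining the results above''). You correctly identify the one genuine subtlety—that linear disjointness over $\FF_{\!q}(\bold{A})$ does not automatically persist over $K(\bold{A})$—and resolve it by rerunning the discriminant argument over $K(\bold{A})$, which works because the coprimality and squarefree-ness of the $D_i$ are established geometrically (Proposition~\ref{proposition: no solutions exist} rules out solutions over $\overline{\FF_{\!q}(\bold{A})}$) and therefore survive the base change, while the squeeze $\Gal(L_i(\mathcal{F}_{\mathbf{A}}),\overline{\FF_{\!q}}(\bold{A}))\subseteq\Gal(L_i(\mathcal{F}_{\mathbf{A}}),K(\bold{A}))\subseteq\Gal(L_i(\mathcal{F}_{\mathbf{A}}),\FF_{\!q}(\bold{A}))\hookrightarrow S_{k_i}$ pins down each factor.
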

	
\end{subsection}



\begin{section}{Counting argument and proof of Theorem \ref{theorem: main theorem}}\label{sec: Counting argument}
	In this section we prove Theorem \ref{theorem: main theorem} the counting Proposition~\ref{prop: counting argument}. The latter provides an asymptotic formula for the number of $\FF_{\!q}$-valued points $\mathbf{a}\in\A^{\!m+1}(\FF_{\!q})$ for which each of the elements $L_{1}(\mathcal{F}_{\!\mathbf{a}}),...,L_{n}(\mathcal{F}_{\!\mathbf{a}})$ generates a prime ideal in $R$. The formulation of Proposition~\ref{prop: counting argument} should be viewed as an explicit Chebotarev theorem. Our proof makes use of \cite[Appendix A and Theorem~3.1]{ABR} and \cite[Proposition~5.1.4]{BF17}.
\end{section}

\begin{subsection}{Factorization types and general counting argument}\label{subsec: factorization type}
		Suppose given an $\FF_{\!q}$-rational point $\bold{a}\in\A^{\!m+1}(\FF_{\!q})$. If $R\big/\big(L_i(\mathcal{F}_{\!\mathbf{a}})\big)$ is a separable $\FF_{\!q}$-algebra, then because $R$ is a Dedekind domain, the ideal $\big(L_i(\mathcal{F}_{\!\mathbf{a}})\big)\subset R$ admits a prime factorization
		\begin{equation*}
		\big(L_i(\mathcal{F}_{\!\mathbf{a}})\big)
		\ =\ 
		\mathfrak{f}_{i1}\cdots\ \!\mathfrak{f}_{i\ell_{i}},
		\end{equation*} 
such that each residue field $\kappa(\mathfrak{f}_{ij})=R/(\mathfrak{f}_{ij})$ is a separable extension of $\FF_{\!q}$. The conditions on $f_0$ and $\sigma_{i}$ guarantee that in this case
		\begin{equation}\label{equation: partition for factorization type}
		k_{i}
		\ =\
		\deg_{\ \!}\text{div}\big(L_i(\mathcal{F}_{\!\mathbf{a}})\big)\big|_{C\backslash E}
		\ =\ 
		\deg_{\ \!}\text{div}(\mathfrak{f}_{i1})|_{C\backslash E}+\cdots +\deg_{\ \!}\text{div}(\mathfrak{f}_{i\ell_{i}})|_{C\backslash E}.
		\end{equation}
	
\begin{definition}\label{definition: factorization type}\normalfont
	Given an $\FF_{\!q}$-rational point $\bold{a}\in\A^{\!m+1}(\FF_{\!q})$, if $R\big/\big(L_i(\mathcal{F}_{\!\mathbf{a}})\big)$ is a separable $\FF_{\!q}$-algebra, the \textit{factorization type} $\lambda_{i, \mathbf{a}}$ of $L_i(\mathcal{F}_{\!\mathbf{a}})$ is the partition of $k_{i}$ given in \eqref{equation: partition for factorization type}.
	
	The {\em $n$-tuple factorization type counting function} of $\mathcal{L}=(L_{1},...,L_{n})$ for a fixed $n$-tuple $\lambda=(\lambda_{1},...,\lambda_{n})$, where each $\lambda_{i}$ is a partition of $k_{i}=\deg L_{i}(f_0)$, is the assignment $\pi_{C,\mathcal{L}}(-;\lambda)$ taking the short interval $I(f_0,E)$ to the value
	  $$
	  \pi_{C,\mathcal{L}}\big(I(f_{0},E);\lambda\big)
	  \ \ \Def\ \ 
	  \#
	  \big\{
	  \mathbf{a}\in \A^{\!m+1}(\FF_{\!q}):R\big/\big(L_i(\mathcal{F}_{\!\mathbf{a}})\big)\mbox{\ is separable \ and\ }\lambda_{i,\bold{a}}=\lambda_{i} \mbox{\ for\ } 1\le i\le n
	  \big\}.
	  $$

\begin{definition}\label{definition: probability}\normalfont
	Given a positive integer $N$ and a permutation $\tau\in S_{N}$, the {\em partition type} of $\tau$, denoted $\lambda_{\tau}$, is the partition of $N$ determined by the cycle decomposition of $\tau$. Having fixed a subgroup $G\subseteq S_{N}$, for each partition $\lambda$ of $N$ we define
	$$
	P(\lambda)
	\ \overset{{}_{\text{def}}}{=}\ 
	\frac{\# \{\tau\in G\lvert \lambda_{\tau}=\lambda\}}{|G|}.
	$$
	In other words, $P(\lambda)$ is the probability that a given permutation in $G$ has partition type $\lambda$.
\end{definition}
\end{definition}

\begin{remark}	
	Given two positive integers $N_{1}$ and $N_{2}$, subgroups $G_{1}\subseteq S_{N_{1}}$ and $G_{2}\subseteq S_{N_{2}}$, and partitions $\lambda_{1}$ of $N_{1}$ and $\lambda_{2}$ of $N_{2}$, we write $P(\lambda_{1})$ and $P(\lambda_{2})$ for the respective probabilities, without explicit reference to the groups $G_{1}$ and $G_{2}$.
\end{remark}

\begin{proposition}\label{prop: counting argument}\normalfont\
	Define
	$$
	\mathcal{L}(\mathcal{F}_{\mathbf{A}})
	\ \Def\ 
	L_1(\mathcal{F}_{\mathbf{A}})\cdots L_n(\mathcal{F}_{\mathbf{A}})
	\ \ \ \in\ \ \ 
	R[\mathbf{A}].
	$$
Let $G=\Gal(\mathcal{L}(\mathcal{F}_{\mathbf{A}}), \FF_q(\mathbf{A}))$, and fix a partition $\lambda=(\lambda_1,...,\lambda_n)$ of
	$$
	N
	\ \Def\ 
	\deg_{\ \!}\text{div}\big(L_{1}(f_0)\cdots L_{n}(f_{0})\big)\big|_{C\backslash E}.
	$$
Then under the assumptions in \S\ref{subsection: relevant background}, there exists a constant $c=c(B)$ such that
	$$
	\Big| \pi_{C,\mathcal{L}(\mathcal{F}_{\mathbf{A}})}\big((I(f_0,E);\lambda\big)    - P(\lambda)q^{m+1} \Big|
	\ \leq\ 
	c\ \!q^{m+1/2}.
	$$
\end{proposition}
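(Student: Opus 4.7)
The plan is to realize $\pi_{C,\mathcal{L}(\mathcal{F}_{\mathbf{A}})}\big(I(f_0,E);\lambda\big)$ as a count of $\FF_q$-rational points of $\A^{m+1}$ with prescribed Frobenius cycle type, then apply the explicit Chebotarev density estimate of \cite[Proposition 5.1.4]{BF17} to the Galois cover of $\A^{m+1}_{\FF_q}$ whose function field is $\text{split}\big(\mathcal{L}(\mathcal{F}_{\mathbf{A}})\big)$. By Corollary \ref{prop:GalGroupOfMult}, this cover has Galois group $G \cong S_{k_1}\times\cdots\times S_{k_n}$, so its conjugacy classes correspond bijectively to $n$-tuples of cycle types $(\mu_1,\ldots,\mu_n)$ with $\mu_i$ a partition of $k_i$.

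First I would set up the standard dictionary: for every $\mathbf{a}\in\A^{m+1}(\FF_q)$ that does not lie over the branch locus of this cover, the ring $R/(L_i(\mathcal{F}_{\mathbf{a}}))$ is automatically separable over $\FF_q$, and the factorization type $\lambda_{i,\mathbf{a}}$ coincides with the cycle type of the $i$-th component of the Frobenius class $\text{Frob}_{\mathbf{a}}\subset G$ under the product decomposition of Corollary \ref{prop:GalGroupOfMult}. This identifies
$$
\pi_{C,\mathcal{L}(\mathcal{F}_{\mathbf{A}})}\big(I(f_0,E);\lambda\big)
\ =\
\#\big\{\mathbf{a}\in\A^{m+1}(\FF_q) : \text{Frob}_{\mathbf{a}}\subseteq\mathcal{C}_\lambda\big\}
\ +\
(\text{contribution from the branch locus}),
$$
where $\mathcal{C}_\lambda\subset G$ is the union of conjugacy classes with cycle type $\lambda$. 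The branch locus is a proper Zariski-closed subset of $\A^{m+1}$, so its $\FF_q$-point count is $O(q^m)$ by Lang--Weil, and this will be absorbed into the target error $O(q^{m+1/2})$.

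Second, I would apply \cite[Proposition 5.1.4]{BF17} (an explicit geometric Chebotarev statement derived from \cite[Appendix A and Theorem 3.1]{ABR}) to the above cover, and sum over conjugacy classes $\mathcal{C}\subseteq\mathcal{C}_\lambda$. This yields
$$
\#\big\{\mathbf{a}\in\A^{m+1}(\FF_q) : \text{Frob}_{\mathbf{a}}\subseteq\mathcal{C}_\lambda\big\}
\ =\
\frac{|\mathcal{C}_\lambda|}{|G|}\ \!q^{m+1}+O\big(q^{m+1/2}\big)
\ =\
P(\lambda)\ \!q^{m+1}+O\big(q^{m+1/2}\big),
$$
by the definition of $P(\lambda)$, giving the claimed asymptotic. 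Since $\#I(f_0,E)=q^{m+1}$ (because $\{1,g_1,\ldots,g_m\}$ is an $\FF_q$-basis of $H^0(C,\mathscr{O}(E))$), the leading order has the correct shape for the subsequent deduction of Theorem \ref{theorem: main theorem}.

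The main obstacle will be verifying that the constant $c$ depends only on $B$, that is, uniformly over all admissible $f_0,\sigma_1,\ldots,\sigma_n,E$. The error term in \cite[Proposition 5.1.4]{BF17} is controlled by geometric invariants of the Galois cover: its degree, the number and multiplicities of its branch components, and an arithmetic genus. The linear disjointness in Corollary \ref{cor: linearly disjoint} presents this cover as (essentially) the fiber product of the $n$ individual covers $\text{split}(L_i(\mathcal{F}_{\mathbf{A}}))/\FF_q(\mathbf{A})$; each of these has degree at most $k_i!\leq B!$, and by Proposition \ref{proposition: no solutions exist} their ramification occurs along pairwise disjoint (and square-free) divisors in $\A^{m+1}$. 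Together these two facts bound the relevant geometric invariants purely in terms of $B$, so that the error constant $c=c(B)$ is indeed uniform. This multiplicative control enabled by the product structure of $G$ is precisely what makes the desired uniform estimate possible.
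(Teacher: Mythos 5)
Your proposal is correct and follows essentially the same route as the paper: reduce the count to a Chebotarev-type estimate (the paper does this by passing via Remark~\ref{rem: reducing count to polys} to an explicit monic separable polynomial and then applying \cite[Theorem~3.1]{ABR}, you invoke \cite[Proposition~5.1.4]{BF17} directly for the cover), and then absorb the $\FF_q$-points of the bad closed locus into the error via Lang--Weil, exactly as in the paper (which takes $Z=\A^{m+1}\setminus U$ where you take the branch locus --- morally the same set). Your concluding paragraph on why $c$ depends only on $B$ makes explicit the geometric uniformity that the paper leaves implicit in its citations to \cite{ABR} and \cite{LangWeil1954}, but this is elaboration rather than a different argument.
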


\begin{remark}\label{rem: reducing count to polys}
	Consider the $\FF_q$-scheme $V(\mathcal{L}(\mathcal{F}_{\mathbf{A}}))$. By \cite[Remark 5.1.5]{BF17}, for each $1\le i\le n$ there exist an affine open $U_i=\sSpec \mathscr{A}_i \subset \mathbb{A}^{m+1}$ and a monic separable polynomial $\mathcal{G}_i(t)\in \mathscr{A}_{i}[t]$ such that 
	$$
	V\big(L_i(\mathcal{F}_{\mathbf{A}})\big)_{U_{i}}
	\ \cong\ 
	\sSpec \mathscr{A}_i[t]\big/\big(\mathcal{G}_{i}(t)\big).
	$$
Hence there exists an affine subscheme $U=\text{Spec}_{\ \!}\mathscr{A}\subset\bigcap_{i=1}^{n} U_i$ such that$$
	V\big(\mathcal{G}(t)\big)_{U}
	\ \iso\ 
	V(\mathcal{L}(\mathcal{F}_{\mathbf{A}}))_{U},
	\ \ \ \ \ \ \mbox{for}\ \ \ \ \ \ 
	\mathcal{G}(t)
	\ \Def\ 
	\mathcal{G}_{1}(t)\cdots \mathcal{G}_{n}(t).
	$$
\end{remark}

\begin{proof}[{\it Proof of Proposition \ref{prop: counting argument}}.]
		Let $U=\sSpec \mathscr{A}$ and $\mathcal{G}(t)\in \mathscr{A}[t]$ as in Remark~\ref{rem: reducing count to polys}. Then there exists some element $a\in\FF_q(\mathbf{A})^{\times}$ such that $a\mathcal{G}(t)\in\FF_q[\mathbf{A}][t]$. Since $V(\mathcal{G}(t))\iso V(\mathcal{L}(\mathcal{F}_{\mathbf{A}}))$
		over $U$, we have that $\text{Gal}\big(a\mathcal{G}(t),\FF_{\!q}(\mathbf{A})\big)\iso G$. Thus by \cite[Theorem~3.1]{ABR}, Proposition~\ref{prop: counting argument} holds with $a\mathcal{G}(t)$ in place of $\mathcal{L}(\mathcal{F}_{\mathbf{A}})$ for some constant $c_1(B)$. Interpreting the closed complement $Z\Def\mathbb{A}^{m+1}\backslash U$ as an $m$-cycle in $\mathbb{A}^{m+1}$, \cite[Lemma 1]{LangWeil1954} implies that there exists some constant $c_2(B)$ such that $\#Z(\FF_q)\leq c_2(B)q^{m}$. Finally, 
		$$
		\begin{array}{rcl}
		\Big| \pi_{C,\mathcal{L}(\mathcal{F}_{\mathbf{A}})}\big(I(f_0,E);\lambda\big)-P(\lambda)\ \!q^{m+1}\Big|
		&
		\leq
		&
		c_{1}(B)\ q^{m+\frac{1}{2}}+c_{2}(B)\ q^{m}
		\\[10pt]
		&
		\leq
		&
		c(B)\ q^{m+\frac{1}{2}},
		\end{array}
		$$
		where $c(B)=c_1(B)+c_2(B)$.
	\end{proof}
\end{subsection}


\begin{subsection}{Proof of Theorem \ref{theorem: main theorem}}\label{Proof of Theorem A}
	
	We obtain Theorem~\ref{theorem: main theorem} as a specific case of the following more general Theorem \ref{theorem: main theorem factorization type}, which deals with arbitrary factorization types.

\begin{Th}\label{theorem: main theorem factorization type}\normalfont
	In the conditions and notation of Theorem~\ref{theorem: main theorem}, let $\lambda$ be a partition as in Proposition~\ref{prop: counting argument}. Then
	$$
		\pi_{C,\mathcal{L}(\mathcal{F}_{\mathbf{A}})}\Big( I(f_0,E);\lambda \Big) = P(\lambda_1)\cdots P(\lambda_n)\ \!\#I(f_0,E)\ \!\Big(1 + O_{B}(q^{-1/2})   \Big).
	$$
\end{Th}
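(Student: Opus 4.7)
The plan is to combine the counting estimate of Proposition~\ref{prop: counting argument} with the Galois-group factorization in Corollary~\ref{prop:GalGroupOfMult}, then identify $\#I(f_{0},E)$ explicitly and absorb the probability constant into the error term.

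First, I would observe that the map $\mathbf{a}\mapsto \mathcal{F}_{\!\mathbf{a}}=f_{0}+a_{0}+\sum_{j=1}^{m}a_{j}g_{j}$ gives a bijection $\mathbb{A}^{\!m+1}(\FF_{\!q})\xrightarrow{\ \sim\ }I(f_{0},E)$, using that $\{1,g_{1},\dots,g_{m}\}$ is an $\FF_{\!q}$-basis of $H^{0}(C,\mathscr{O}(E))$ and $I(f_{0},E)=f_{0}+H^{0}(C,\mathscr{O}(E))$. Hence $\#I(f_{0},E)=q^{m+1}$. Proposition~\ref{prop: counting argument} then yields
$$
\pi_{C,\mathcal{L}(\mathcal{F}_{\!\mathbf{A}})}\big(I(f_{0},E);\lambda\big)
\ =\
P(\lambda)\,q^{m+1}\ +\ O_{B}\big(q^{m+1/2}\big)
\ =\
P(\lambda)\,\#I(f_{0},E)\ +\ O_{B}\big(q^{m+1/2}\big),
$$
where $P(\lambda)$ is computed relative to the group $G=\text{Gal}\big(\mathcal{L}(\mathcal{F}_{\!\mathbf{A}}),\FF_{\!q}(\mathbf{A})\big)$ acting on its $N=k_{1}+\cdots+k_{n}$ roots.

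The main combinatorial step is to show $P(\lambda)=P(\lambda_{1})\cdots P(\lambda_{n})$. By Corollary~\ref{prop:GalGroupOfMult}, the natural projections of $G$ onto the Galois groups of the individual factors $L_{i}(\mathcal{F}_{\!\mathbf{A}})$ give an isomorphism $G\cong S_{k_{1}}\!\times\cdots\times S_{k_{n}}$, and the set of $N$ roots of $\mathcal{L}(\mathcal{F}_{\!\mathbf{A}})$ decomposes as the disjoint union of the root sets of the $L_{i}(\mathcal{F}_{\!\mathbf{A}})$, on each of which $G$ acts through its $i$th factor. Consequently the cycle-type partition of an element $\tau=(\tau_{1},\dots,\tau_{n})\in G$ acting on the $N$ roots is precisely the concatenation of the cycle-type partitions of each $\tau_{i}\in S_{k_{i}}$. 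Since the factors $\tau_{i}$ are chosen independently under uniform sampling from $G$, the probability factorizes:
$$
P(\lambda)\ =\ \prod_{i=1}^{n}P(\lambda_{i}).
$$

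Finally, I would note that each $\lambda_{i}$ is a genuine partition of $k_{i}\le B$, so $P(\lambda_{i})\ge 1/k_{i}!\ge 1/B!$ and hence $\prod_{i}P(\lambda_{i})\ge 1/(B!)^{n}$ depends only on $B$. Therefore the error $O_{B}(q^{m+1/2})$ can be rewritten as $\prod_{i}P(\lambda_{i})\cdot \#I(f_{0},E)\cdot O_{B}(q^{-1/2})$, yielding the claimed asymptotic
$$
\pi_{C,\mathcal{L}(\mathcal{F}_{\!\mathbf{A}})}\big(I(f_{0},E);\lambda\big)
\ =\
P(\lambda_{1})\cdots P(\lambda_{n})\,\#I(f_{0},E)\,\Big(1+O_{B}(q^{-1/2})\Big).
$$
The main obstacle is purely bookkeeping: verifying that the product decomposition of $G$ from Corollary~\ref{prop:GalGroupOfMult} is compatible with its combined action on the disjoint union of root sets, so that concatenation of cycle types corresponds correctly to the composite partition $\lambda$ appearing in Proposition~\ref{prop: counting argument}. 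Theorem~\ref{theorem: main theorem} then follows by specializing to $\lambda_{i}=(k_{i})$, for which $P(\lambda_{i})=1/k_{i}=1/\deg(f_{0}+\sigma_{i})$.
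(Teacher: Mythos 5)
Your proof is correct and follows essentially the same route as the paper's: invoke Corollary~\ref{prop:GalGroupOfMult} to get $G\cong S_{k_1}\times\cdots\times S_{k_n}$, observe $P(\lambda)=\prod_i P(\lambda_i)$ and $\#I(f_0,E)=q^{m+1}$, and then apply Proposition~\ref{prop: counting argument}. You supply slightly more detail than the paper does on two points that it treats as immediate --- the factorization of $P(\lambda)$ via independence of the $\tau_i$, and the lower bound $P(\lambda_i)\ge 1/k_i!\ge 1/B!$ needed to turn the additive error $O_B(q^{m+1/2})$ into a relative error $1+O_B(q^{-1/2})$ --- but the argument is the same.
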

\begin{proof}[{\it Proof of Theorem \ref{theorem: main theorem factorization type}}]
	By Proposition~\ref{prop:GalGroupOfMult}, $\Gal(\mathcal{L}(\mathcal{F}_{\mathbf{A}}),\FF_q(\mathbf{A}))\iso S_{k_1}\!\times\cdots\times S_{k_n}$. Since $P(\lambda)=P(\lambda_1)\cdots P(\lambda_n)$ and $\#I(f_0,E)=q^{m+1}$, Proposition~\ref{prop: counting argument} gives 
		$$
		\begin{array}{rcl}
		\pi_{C,\mathcal{L}(\mathcal{F}_{\mathbf{A}})}\Big( I(f_0,E);\lambda \Big)
		&
		\!\!=\!\!
		&
		P(\lambda_1)\cdots P(\lambda_n)\ \!q^{m+1} +O_{B}(q^{m+1/2})
		\\[8pt]
		&
		\!\!=\!\!
		&
		P(\lambda_1)\cdots P(\lambda_n)\ \!\#I(f_0,E)\ \!\Big(1 + O_{B}(q^{-1/2})   \Big),
		\end{array}
		$$
as desired.
\end{proof}

\begin{proof}[{\it Proof of Theorem \ref{theorem: main theorem}}]
	In Theorem~\ref{theorem: main theorem factorization type}, take each $\lambda_i$ to be the partition of $k_i$ into a single cell. Then, $P(\lambda_i)=\frac{1}{k_i}=\frac{1}{\deg(f_0+\sigma_i)}$ and so
	$$
	\pi_{C,\mathcal{L}(\mathcal{F}_{\mathbf{A}})}\big( I(f_0,E);\lambda \big)
	\ =\ 
	P(\lambda_1)\cdots P(\lambda_n)\ \!q^{m+1} +O_{B}(q^{m+1/2})
	$$
	$$
	\ \ \ \ \ \ \ \ \ \ \ \ \ \ \ \ \ \ \ \ \ \ \ \ \ \ \ \ \ \ \ \ \ \ \ \ \ \ 
	\ =\ 
	\frac{\#I(f_0,E)}{\prod_{i=1}^{n}\deg_{\ \!}\text{div}(f_0+\sigma_i)|_{C\backslash E}}\ \!\Big(1 + O_{B}(q^{-1/2})   \Big),
	$$
	as desired.
\end{proof}

\begin{remark}
	In the formulation of the Hardy-Littlewood Conjecture \ref{conj:HL}, one can consider polynomial functions more general than the monic linear functions $L_{i}(X)=\sigma_{i}+X\in\mathcal{O}_{K}[X]$. Some of the resulting variant forms of Conjecture \ref{conj:HL} are well known conjectures or results in their own right. For example, Conjecture \ref{conj:HL} becomes the quantitative Goldbach conjecture if we set $n=2$ with $L_1(X)=X$ and $L_2(X)=\sigma -X$, for $\sigma\in\mathcal{O}_{K}$. If one takes $L_1(X)= X$ and $L_2(X)=\sigma_{0} + \sigma_{1} X$ for $\sigma_{0},\sigma_{1}\in\mathcal{O}_{K}$, then Conjecture \ref{conj:HL} provides an asymptotic count of primes in an arithmetic progression.
	
	 Over $\FF_{\!q}(t)$, Bary-Soroker and the first author establish a version of Theorem \ref{theorem: main theorem} for non-monic linear functions $\sigma_{i1}X+\sigma_{i0}\in\FF_{\!q}[t][X]$. We expect that this and other interesting and important variants of Theorem \ref{theorem: main theorem} hold on curves of higher genus over $\FF_{\!q}$. We plan to investigate these questions in future work.
\end{remark}

\end{subsection}

\end{section}


\vskip 1cm

\bibliography{refrencesEfrat}
\bibliographystyle{plain}

\end{document}